\newtheorem{theorem}{Theorem}
\newtheorem{remark}{Remark}
\newtheorem{lemma}{Lemma}
\newtheorem{corollary}{Corollary}
\newtheorem{proposition}{Proposition}
\begin{document}
\title{Forced Convex Mean Curvature Flow in Euclidean Spaces}
\author{Guanghan Li$^{\dag\ddag}$
 \and Isabel Salavessa$^{\ddag}$}
\date{}
\protect\footnotetext{\!\!\!\!\!\!\!\!\!\!\!\!\! {\bf MSC 2000:}
Primary: 53C44, 35K55; Secondary: 53A05.
\\
{{\bf ~~Key Words}: Mean curvature Flow - parabolic equation -
 maximum principle - forcing term - normalization }\\[1mm]
 The first author is partially supported by NSFC (No.10501011) and
by Funda\c{c}\~{a}o Ci\^{e}ncia
e Tecnologia (FCT) through a FCT fellowship SFRH/BPD/26554/2006.
 The second author is partially supported by FCT through
the Plurianual of CFIF and POCI/MAT/60671/2004.}
\maketitle ~~~\\[-7mm]
{\footnotesize $\dag$ School of Mathematics and Computer
Science, Hubei University, Wuhan, 430062, \\[-1mm]P. R. China.
email: liguanghan@163.com}\\
{\footnotesize $\ddag$
  Centro de F\'{\i}sica das Interac\c{c}\~{o}es Fundamentais,
Instituto Superior T\'{e}cnico,\\[-1mm]
Technical University of Lisbon,
Edif\'{\i}cio Ci\^{e}ncia, Piso 3,
Av.\ Rovisco Pais, \\[-1mm] 1049-001 Lisboa, Portugal.
email: isabel.salavessa@ist.utl.pt}

\begin{abstract}
In this paper, we consider the mean curvature flow of convex
hypersurfaces in Euclidean spaces with a general forcing term. We
show that the flow may shrink to a point in finite time if the
forcing term is small, or exist for all times and expand to infinity
if the forcing term is large enough. The flow can also converge to a
round sphere for some special forcing term and initial hypersurface.
Furthermore, the normalization of the flow is carried out so that
long time existence and convergence of the rescaled flow are
studied. Our work extends Huisken's well-known mean curvature flow
and McCoy's mixed volume preserving mean curvature flow.
\end{abstract}
\section{Introduction}
\label{introduction}

\renewcommand{\thesection}{\arabic{section}}
\renewcommand{\theequation}{\thesection.\arabic{equation}}
\setcounter{equation}{0}

\indent

Let $M^n$ be a smooth and compact manifold of dimension $n\geq 2$
without boundary, and $X_0:M^n\longrightarrow {\mathbb R}^{n+1}$
be a smooth hypersurface immersion of $M^n$ which is strictly
convex. We consider a smooth family of maps $X_t=X(\cdot, t)$
evolving according to
\begin{eqnarray}\left\{\begin{array}{lll}
\frac {\partial }{\partial t} X(x,t) &=& \{h(t)-H(x, t)\}{\bf
v}(x,t), \quad x\in M^n,\\[2mm]
X(\cdot, 0)&=& X_0,
\end{array}\right.
\end{eqnarray}
where $H$ is the mean curvature of $M_t=X_t(M^n)$, ${\bf v}$ the
outer unit normal vector field, and $h(t)$ a nonnegative continuous
function. The curvature flow (1.1) is a strictly parabolic equation
and the short time existence easily follows from \cite {hp}.
Therefore we suppose that the evolution equation (1.1) has a smooth
solution on a maximal time interval $[0, T_{\max})$ for some
$T_{\max}>0$. Often different forcing term will lead to different
maximal time interval. We always assume that $h(t)$ is continuous in
$[0, T_{\max})$.

If $h(t)=0$, (1.1) is just the well-known mean curvature flow
\cite {h1}. In this case, (1.1) is contracting and $T_{\max}$ is
finite. If $h(t)$ is the average of the mean curvature on $M_t$,
i.e. $h(t)= {\int _{M_t}H_td\mu_t}/{\int _{M_t}d\mu_t}$, where
$d\mu_t$ is the area element of $M_t$, (1.1) is then the volume
preserving mean curvature flow \cite {h2}, which exists on all
time $[0, \infty)$, and the solution converges to a round sphere.
The hypersurfaces area preserving mean curvature flow for which
$h(t)=\int_{M_t}H_t^2d\mu_t/\int_{M_t}H_td\mu_t$ also exists for
all time and converges to a round sphere \cite {m1}. The mixed
volume preserving mean curvature flow \cite {m2} for which
$h(t)={\int _{M_t}HE_{k+1}d\mu_t }/{\int _{M_t}E_{k+1}d\mu_t}$,
$k=-1, 0, 1, \cdots, n-1$, where $E_l$ is the $l$-th elementary
symmetric function of the principal curvatures of $M_t$,
generalizes the results of the volume preserving mean curvature
flow \cite {h2} and surfaces area preserving mean curvature flow
\cite {m2}, and exists for all time and converges to a round
sphere. In fact, it can be checked that if the forcing term $h$ is
a small constant, the solution to (1.1) is still contracting. But
if $h$ is large enough, the curvature flow (1.1) expands and the
solution exists for all time.

From above, we see that different forcing term $h(t)$ leads to
different existence and convergence. A natural question is how to
unify all these cases?

In this paper, we study the curvature flow (1.1) with a general
forcing term $h(t)$ such that the limit $\lim_{t\rightarrow
T_{\max}} h(t)$ exists. We want to show that if the initial
hypersurface is convex and compact, the shape of $M_t$ approaches
the shape of a round sphere as $t\rightarrow T_{\max}$. In order to
describe the shape of the limiting hypersurface, we carry out a
normalization as in \cite {h1}. For any time $t$, where the solution
$X(\cdot, t)$ of (1.1) exists, let $\psi (t)$ be a positive factor
such that the hypersurface $\widetilde{M}_t$ given by
 $$\widetilde{X}(x, t)=\psi (t)X(x,t)$$
  has total area equal to
$|M_0|$, the area of $M_0$
$$\int_{\widetilde{M}_t}d\widetilde{\mu}_t=|M_0|, \qquad \mbox {for
all } t\in [0, T_{\max}).$$

After choosing the new time variable $\tilde{t}(t)=\int
_0^t\psi^2(\tau)d\tau$, we will see that $\widetilde{X}$ satisfies
the following evolution equation
\begin{eqnarray} \left\{\begin{array}{l}\frac{\partial }
{\partial \tilde{ t}} \widetilde{X}
=\{\widetilde{h}-\widetilde{H}\}{\bf \widetilde{v}}+{\frac 1n}
\widetilde{\theta}\widetilde{X},
\\[2mm]
\widetilde{X}(\cdot, 0)= X_0, \end{array}\right.
\end{eqnarray}
where $\widetilde{h}=\psi ^{-1}h$, $\widetilde{\theta}=\psi
^{-2}\theta$ and $\theta$ is given by
$$\theta=-\frac {\int _{M}(h-H)Hd\mu}{\int _Md\mu}.$$

In section 3, we have a time sequence $\{T_i\}$ such that
$T_i\rightarrow T_{\max}$ as $i\rightarrow \infty$, and a limit
$$\lim_{T_{i}\rightarrow T_{\max}}\psi (T_i)=\Lambda.$$

We now state our main theorem:
\begin{theorem}Let $n\geq 2$ and $M_0$  an $n$-dimensional smooth,
compact and strictly convex
 hypersurface immersed in ${\mathbb R}^{n+1}$.
Then for any nonnegative continuous function $h(t)$,
 there exists a unique, smooth solution to the evolution equation $(1.1)$ on a
 maximal time interval $[0, T_{\max})$. If additionally the following
limit exists and satisfies
\begin{equation}
\lim_{t\rightarrow T_{\max}}h(t)=\overline h< +\infty,
\end{equation}
  then we have:\\[-3mm]

 $(I)$ If $\Lambda =\infty$, then $T_{\max}<\infty$ and the curvature flow 
$(1.1)$ converges uniformly to a point as $t\rightarrow T_{\max}$.
Moreover the normalized equation $(1.2)$ has a solution
 $\widetilde{X}(x,\tilde{t})$ for all times $0\leq \tilde{t}< \infty$, and
 the hypersurfaces $\widetilde{M}(x, \tilde{t})$
  converge to a round sphere of area $|M_0|$ in the $C^{\infty}-$topology,
  as $\tilde{t}\rightarrow \infty$.\\[-4mm]

  $(II)$ If $0<\Lambda<\infty$, then $T_{\max}=\infty$, and the solutions
to $(1.1)$  converge uniformly to a round sphere in the $C^{\infty}-$topology
 as  $t\rightarrow  \infty$.\\[-4mm]

  $(III)$ If $\Lambda =0$, then $T_{\max}=\infty$. Moreover if
$\overline h\neq 0$, the solutions
  to $ (1.1)$ expand uniformly to $\infty$ as $t\rightarrow \infty$ and
   if the rescaled solutions to $(1.2)$ converge to a smooth
  hypersurface, then the limit must be a round sphere of total area $|M_0|$.
\end{theorem}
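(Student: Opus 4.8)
\emph{Sketch of proof.} The plan is to adapt Huisken's analysis of the mean curvature flow of convex hypersurfaces and McCoy's treatment of volume-type forcing terms \cite{h1,m2} to the arbitrary forcing term $h(t)$; the only properties of $h$ that will be used are $h(t)\ge 0$, the bound $\sup_{[0,T_{\max})}h<\infty$ coming from $(1.4)$, and the fact that $h$ has no spatial gradient. First I would record the evolution equations under $(1.1)$: writing $h_{ij}$ for the second fundamental form, $A$ for the Weingarten map and $|A|^2$ for its squared norm, one has $\partial_t g_{ij}=2(h-H)h_{ij}$, $\partial_t(d\mu)=H(h-H)\,d\mu$, $\partial_t H=\Delta H+|A|^2(H-h)$, $\partial_t h^i_j=\Delta h^i_j+|A|^2h^i_j-h\,(h^2)^i_j$, and $\partial_t|A|^2=\Delta|A|^2-2|\nabla A|^2+2|A|^4-2h\,\mathrm{tr}(A^3)$. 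Integrating the area equation gives $\tfrac{d}{dt}\log|M_t|=-\theta$, hence $|M_t|=|M_0|e^{-\int_0^t\theta\,d\tau}$, and since $\widetilde M_t=\psi(t)M_t$ has area $|M_0|$ we obtain $\psi(t)^n=|M_0|/|M_t|=e^{\int_0^t\theta\,d\tau}$; thus $\Lambda=\infty$, $\Lambda\in(0,\infty)$, $\Lambda=0$ correspond, along $\{T_i\}$, to $|M_{T_i}|\to 0$, $|M_{T_i}|\to$ a positive constant, and $|M_{T_i}|\to\infty$ respectively.

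Next I would establish the two structural estimates that drive everything. \emph{(a) Preservation of pinching.} By Hamilton's tensor maximum principle, the cone $\{h_{ij}\ge\varepsilon_0 Hg_{ij}\}$ is preserved for some $\varepsilon_0>0$ depending only on $M_0$; the forcing reaction term $-h\,(h^2)^i_j$ vanishes on a null eigenvector of $h^i_j-\varepsilon_0H\delta^i_j$ once convexity is known, so it does not destroy the inequality. Hence $M_t$ stays strictly convex and all principal curvatures are uniformly comparable, so each $M_t$ is trapped between concentric balls $B_{\rho_-(t)}\subset\Omega_t\subset B_{\rho_+(t)}$ with $\rho_-/\rho_+\ge c(M_0)>0$, $|M_t|\sim\rho_+(t)^n$, and $H\sim\rho_+(t)^{-1}$ uniformly on $M_t$. \emph{(b) Roundness.} Following Huisken, one derives the gradient estimates for $A$ and, by a Stampacchia-type iteration, the decay of the traceless part $|\mathring A|^2:=|A|^2-\tfrac1n H^2$ relative to $H^2$; because $h=h(t)$ contributes only bounded zeroth-order terms (with $\nabla h=0$), the Huisken/McCoy integral inequalities survive the forcing, and, transferred to the normalized flow $(1.2)$, they yield $\max_{\widetilde M_{\tilde t}}|\mathring{\widetilde A}|^2/\widetilde H^2\to 0$.

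With these in hand I would run the trichotomy. If $\Lambda=\infty$ then $|M_t|\to 0$, so $\rho_+(t)\to 0$; once $\rho_+$ is small, pinching gives $H\gtrsim c/\rho_+\gg h$ near $T_{\max}$, and the sphere-comparison inequality $\dot\rho_+\le h-c'/\rho_+$ forces $\rho_+\to 0$ at a finite time, so $T_{\max}<\infty$ and $(1.1)$ shrinks uniformly to a point; moreover $|M_t|\sim(T_{\max}-t)^{n/2}$ gives $\psi^2\sim(T_{\max}-t)^{-1}$, so $\tilde t=\int_0^t\psi^2\,d\tau\to\infty$, while on $(1.2)$ the forcing term $\widetilde h=\psi^{-1}h\to 0$; combining this with the roundness estimate, the fixed area $|M_0|$, and the $C^0\to C^\infty$ bootstrap (interpolation plus Schauder estimates) exactly as in \cite{h1}, $\widetilde M_{\tilde t}$ converges in $C^\infty$ to the round sphere of area $|M_0|$. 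If $0<\Lambda<\infty$ then $|M_t|$, hence $\rho_\pm(t)$, stay bounded above and below, so no singularity forms, $T_{\max}=\infty$ and $\tilde t\to\infty$; the roundness estimate together with the (essentially constant) normalization then yields $C^\infty$-convergence of $M_t$ itself to a round sphere as in case (I). If $\Lambda=0$ then $|M_t|\to\infty$, so by (a) $H\sim\rho_+^{-1}\to 0$ uniformly and the curvature stays bounded, whence the flow cannot become singular and $T_{\max}=\infty$; if moreover $\overline h\neq 0$, then $h-H\ge\overline h/2>0$ for large $t$, so the flow moves outward at normal speed bounded below and convexity forces $M_t$ to leave every ball, i.e.\ to expand uniformly to $\infty$; and if the rescaled solutions to $(1.2)$ converge to a smooth hypersurface, the estimate $|\mathring{\widetilde A}|^2/\widetilde H^2\to 0$ forces the limit to be totally umbilic, hence a round sphere of area $|M_0|$.

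The main obstacle is step (b): proving the uniform roundness estimate $|\mathring A|^2\le C\,H^{2-\delta}$ (equivalently $|\mathring{\widetilde A}|^2/\widetilde H^2\to 0$) and then upgrading weak subconvergence to full $C^\infty$-convergence. This requires the whole Huisken/McCoy package — evolution of $|\nabla A|^2$, of a pinching function of the form $|\mathring A|^2/H^{2-\sigma}$, and the attendant integral and iteration estimates — carried out with the extra lower-order terms generated by $h(t)$ and controlled using only $\nabla h=0$ and $\sup h<\infty$. A secondary difficulty is the bookkeeping that links $\Lambda$ to $T_{\max}$, and in case (I) to $\tilde t\to\infty$, and that checks that the normalized forcing term genuinely vanishes so that the limiting problem reduces to the standard normalized flow of \cite{h1}.
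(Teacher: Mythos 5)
Your overall architecture (normalization via $\psi$, preserved pinching cone, trapping between comparable balls, trichotomy driven by $|M_{T_i}|$, sphere comparison for the containment/finite-time arguments) matches the paper's. But there is a genuine gap at the point you lean on hardest: the assertion that pinching plus the ball trapping gives ``$H\sim\rho_+(t)^{-1}$ uniformly on $M_t$.'' Pointwise pinching $\lambda_{\max}(x)\le c\,\lambda_{\min}(x)$ controls only the ratio of curvatures at each point; via Andrews' width estimate it yields $r_{out}\le c_3 r_{in}$, but it does \emph{not} bound $H$ above or below in terms of the inradius (a convex body with inradius and circumradius both $\approx 1$ can still have arbitrarily large, fully pinched curvature on a small cap). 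The paper has to work for this bound: it is the content of Lemmas 7--8 (and 14, 17--18), obtained by applying the maximum principle to Tso's function $\Phi=H/(\mathcal{Z}-\alpha)$, and even then only an upper bound on $\widetilde H$ is obtained in cases (I)--(II) and only a lower bound in case (III) --- the authors explicitly state they do \emph{not} know whether $\widetilde H$ is bounded above in case (III). Your case (III) argument (``$H\sim\rho_+^{-1}\to 0$ uniformly, the curvature stays bounded, whence $T_{\max}=\infty$'' and ``$h-H\ge\overline h/2$ so the flow expands'') rests entirely on this unproven uniform curvature bound; the paper instead proves $T_{\max}=\infty$ and the expansion by integrating the comparison-sphere ODE and invoking the containment principle, which needs no curvature bound at all, and correspondingly proves only the \emph{conditional} roundness statement in (III).

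On the roundness step you also take a genuinely different, and much heavier, route than the paper. You propose Huisken's full Stampacchia package ($|\mathring A|^2\le CH^{2-\sigma}$, gradient estimates for $A$, iteration) and flag it yourself as the main unresolved obstacle. The paper avoids all of this: because the forcing contributes the term $-\frac{2h}{H^3}\bigl(H\,\mathrm{tr}(A^3)-|A|^4\bigr)$, which is nonpositive by convexity (Lemma 3(i)) since $h\ge 0$, the scale-invariant quantity $f=|A|^2/H^2$ satisfies a clean parabolic inequality and the weak plus strong maximum principle already show $\max f$ is strictly decreasing unless $M_t$ is a round sphere; combined with the curvature bounds, the uniform radius bounds and standard parabolic regularity for the radial graph equation, this yields the $C^\infty$ convergence in all three cases. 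So the two concrete repairs your sketch needs are: (i) replace the claimed pinching-implies-curvature-bound step by the Tso-type maximum principle argument for $H/(\mathcal{Z}-\alpha)$ (and accept that in case (III) only the conditional conclusion is available), and (ii) either carry out the Stampacchia iteration with the forcing terms in full, or switch to the simpler $f=|A|^2/H^2$ maximum-principle argument that the favorable sign of the forcing term makes possible.
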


\begin{remark}
$(i)$ One can check that Theorem 1 includes Huisken's mean curvature
flow \cite {h1} and volume preserving mean curvature flow \cite
{h2}, McCoy's surface area preserving mean curvature flow \cite
{m1} and mixed volume preserving mean curvature flow \cite {m2}.\\[1mm]
$(ii)$ The assumption $(1.3)$ seems not natural since often the maximal
existing time $T_{\max}$ of $(1.1)$ depends on $h(t)$. In fact we can
use a stronger assumption that $h(t)$ is a nonnegative continuous
function on $[0, \infty)$ and satisfies $\lim_{t\rightarrow
\infty}h(t)< +\infty$. Our result still includes all cases in $(i)$.
\end{remark}

The extreme cases of Theorem 1 can also be considered.
\begin{remark}
$(i)$ For case $(I)$, when $\overline h=\infty$, $T_{\max}$ may not be
finite, even though $M_t$ is contracting (see Remark 3 $(ii)$ in
section 4). A sphere: $r(t)=\frac{1}{t+1}$,
$h(t)=n(t+1)-\frac{1}{(t+1)^2}$, is such an example, whose maximal
existing time $T_{\max}=\infty$.\\[1mm]
$(ii)$ For case $(III)$, if $\overline h=0$, $T_{\max}$ is also
infinite (see section 6). We don't know whether the solutions to
$(1.1)$ expand uniformly to $\infty$ as $t\rightarrow \infty$, but
we can find the special solution satisfying that condition. In
fact, a sphere: $r(t)=\sqrt{t+1}$,
$h(t)=\frac{2n+1}{2\sqrt{t+1}}$, is such a particular example, for
which $M_t$ expands to infinity. If $\overline h=\infty$, by
similar discussion as in section 6, we can show that $M_t$ expands
to infinity, but $T_{\max}$ may not be $\infty$. For example, the
sphere $r(t)=\frac{1}{1-t}$, $h(t)=n(1-t)+\frac{1}{(1-t)^2}$ is a
solution to (1.1), for which $T_{\max}=1$, and $r\rightarrow
\infty$, as $t\rightarrow 1$.
\end{remark}

We remark that Curvature flow in Euclidean spaces with different
forcing terms $h(t)$ were also studied by
Schn$\ddot{u}$rer-Smoczyk \cite {ss}, and Liu-Jian \cite {lj1}. If
the ambient space is a Minkowski space, Aarons \cite {aa} studied
the forced mean curvature flow of graphs and obtained the long
time existence and convergence under suitable assumptions on
$h(t)$. And a kind of trichotomy to the initial hypersurface was
used by Chou-Wang \cite {cw} in logarithmic Gauss curvature flow.

This paper is organized as follows: Section 2 introduces some known
results on curvature flow (1.1) and some preliminary facts of convex
hypersurfaces, which will be used later. In section 3, we carry out
the normalization of (1.1), and estimate the inner and outer radii
of the rescaled convex hypersurfaces. In terms of the limiting shape
of the scaling factor $\psi (t)$ as $t\rightarrow T_{\max}$, long
time existence and convergence of solutions to (1.1) or (1.2) are
proved in section 4, 5 and 6, separately, and therefore we complete
the proof of Theorem 1.

\section{Preliminaries}
\label{section:2}

\renewcommand{\thesection}{\arabic{section}}
\renewcommand{\theequation}{\thesection.\arabic{equation}}
\setcounter{equation}{0}

\indent

Let $M$ be a smooth hypersurface immersion in ${\mathbb R}^{n+1}$.
We will use the same notation as in \cite {h2}. In particular, for a
local coordinate system $\{x^1, \cdots, $ $x^n\}$ of $M$, $g=g_{ij}$
and $A=h_{ij}$ denote respectively the metric and second fundamental
form of $M$. Then the mean curvature and the square of the second
fundamental form are given by
$$H=g^{ij}h_{ij}, \qquad |A|^2
=g^{ij}g^{lm}h_{il}h_{jm}, $$ where $g^{ij}$ is the $(i,j)$-entry of
the inverse of the matrix $(g_{ij})$. In the sequel we will use
$\lambda _i$ to denote the $i$-th principle curvature of the
hypersurface. Throughout this paper we sum over repeated indices
from $1$ to $n$ unless otherwise indicated.

The system of (1.1) is a strictly parabolic equation for which short
time existence is well known. The gradient on $M_t$ and
Beltrami-Laplace operator on $M_t$ are denoted by $\nabla $ and
$\triangle$ respectively. As in \cite {h2,m2}, we have the following
evolution equations for various geometric quantities under the flow
(1.1)
\begin{lemma}
The following evolution equations hold for any solution to equation
(1.1)\\[-2mm]

 (i) ~~~$\frac {\partial }{\partial
t}g_{ij}=2(h-H)h_{ij}$.\\[-3mm]

(ii) ~~$\frac {\partial }{\partial t}d\mu _t=H(h-H)d\mu _t$.\\[-3mm]

 (iii) ~
$\frac {\partial }{\partial t}{\bf v}=\nabla H$.\\[-3mm]

(iv) ~~$\frac {\partial }{\partial t}h_{ij}=\triangle
h_{ij}+(h-2H)h_{ik}h_j^k+|A|^2h_{ij}$.\\[-3mm]

(v)  ~~~$\frac {\partial }{\partial t}H=\triangle H-(h-H)|A|^2$.\\[-3mm]

(vi) ~~ $\frac {\partial }{\partial t}|A|^2=\triangle |A|^2-2|\nabla
A|^2+2|A|^4-2h\emph{tr}(A^3) $.\\[2mm]
Here $d\mu _t$ is the area element of $M_t$, and
$h_i^j=h_{ik}g^{kj}$.
\end{lemma}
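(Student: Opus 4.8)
The plan is to obtain all six identities by direct computation in a fixed local coordinate system $\{x^1,\dots,x^n\}$ on $M^n$, following the unforced computations of Huisken \cite{h1,h2} and McCoy \cite{m1,m2}; the only new ingredient is the forcing term, and the key simplification is that $h=h(t)$ is constant along each $M_t$, so $\nabla h\equiv 0$. I abbreviate the normal speed by $F:=h-H$, so that $\frac{\partial}{\partial t}X=F{\bf v}$, and I use the standard structure equations $\partial_i\partial_jX=\Gamma_{ij}^k\partial_kX-h_{ij}{\bf v}$ (Gauss) and $\partial_i{\bf v}=h_i^k\partial_kX$ (Weingarten), together with $g_{ij}=\langle\partial_iX,\partial_jX\rangle$ and $h_{ij}=\langle\partial_iX,\partial_j{\bf v}\rangle=-\langle\partial_i\partial_jX,{\bf v}\rangle$; since $t$ and the $x^i$ are independent, $\partial_t$ commutes with each $\partial_i$.

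The first three identities are immediate. Differentiating $g_{ij}=\langle\partial_iX,\partial_jX\rangle$ in $t$ and using $\partial_i\partial_tX=(\partial_iF){\bf v}+F\partial_i{\bf v}$ together with $\langle{\bf v},\partial_jX\rangle=0$ and Weingarten gives $\frac{\partial}{\partial t}g_{ij}=2Fh_{ij}=2(h-H)h_{ij}$, which is (i). Then from $d\mu_t=\sqrt{\det g}\,dx$ and $\partial_t\log\det g=g^{ij}\partial_tg_{ij}$ one gets $\frac{\partial}{\partial t}d\mu_t=\tfrac12 g^{ij}(\partial_tg_{ij})\,d\mu_t=H(h-H)\,d\mu_t$, which is (ii). For (iii), $|{\bf v}|^2=1$ forces $\partial_t{\bf v}$ to be tangent to $M_t$, while $\langle\partial_t{\bf v},\partial_iX\rangle=-\langle{\bf v},\partial_i(F{\bf v})\rangle=-\partial_iF=\partial_iH$ (here $\nabla h=0$ enters), so $\partial_t{\bf v}=g^{ij}(\partial_iH)\partial_jX=\nabla H$.

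The substantive step is (iv). I would differentiate $h_{ij}=\langle\partial_iX,\partial_j{\bf v}\rangle$, substitute (iii) for $\partial_t{\bf v}$, and rewrite $\langle\partial_i\partial_jX,\nabla H\rangle$ using the Gauss formula; collecting terms should give the general normal-variation identity $\frac{\partial}{\partial t}h_{ij}=Fh_{ik}h_j^k-\nabla_i\nabla_jF$, which, since $\nabla h=0$, equals $(h-H)h_{ik}h_j^k+\nabla_i\nabla_jH$. One then trades $\nabla_i\nabla_jH$ for $\triangle h_{ij}$ via Simons' identity $\triangle h_{ij}=\nabla_i\nabla_jH+Hh_{ik}h_j^k-|A|^2h_{ij}$; this identity is itself proved by writing $\triangle h_{ij}=g^{kl}\nabla_k\nabla_lh_{ij}$, commuting the two covariant derivatives with the Ricci identity, and substituting the Codazzi equation $\nabla_ih_{jk}=\nabla_jh_{ik}$ and the Gauss equation $R_{ijkl}=h_{ik}h_{jl}-h_{il}h_{jk}$. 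Keeping track of the curvature terms so that they cancel to leave exactly $Hh_{ik}h_j^k-|A|^2h_{ij}$ is the one place that demands care; it is the main obstacle, though a routine one. Substituting then yields (iv): $\frac{\partial}{\partial t}h_{ij}=\triangle h_{ij}+(h-2H)h_{ik}h_j^k+|A|^2h_{ij}$.

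Finally, (v) and (vi) follow by tracing (iv). For (v), $\frac{\partial}{\partial t}H=(\partial_tg^{ij})h_{ij}+g^{ij}\partial_th_{ij}$ with $\partial_tg^{ij}=-g^{ik}g^{jl}\partial_tg_{kl}=-2(h-H)h^{ij}$, and the terms combine to $\triangle H-(h-H)|A|^2$. For (vi), writing $|A|^2=h_j^ih_i^j$ one first obtains, from (iv) and $\partial_tg^{ij}=-2(h-H)h^{ij}$, that $\partial_th_j^i=\triangle h_j^i-h\,h^i_kh_j^k+|A|^2h_j^i$; then $\frac{\partial}{\partial t}|A|^2=2h^j_i\,\partial_th^i_j$, and using the Bochner-type identity $\triangle|A|^2=2h^{ij}\triangle h_{ij}+2|\nabla A|^2$ to replace $2h^j_i\triangle h^i_j$ by $\triangle|A|^2-2|\nabla A|^2$, one arrives at (vi): $\frac{\partial}{\partial t}|A|^2=\triangle|A|^2-2|\nabla A|^2+2|A|^4-2h\,\mathrm{tr}(A^3)$.
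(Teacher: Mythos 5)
Your proposal is correct: every sign and coefficient checks out (the general normal-variation formula $\partial_t h_{ij}=-\nabla_i\nabla_jF+Fh_{ik}h^k_j$ with $F=h-H$, combined with Simons' identity, does give (iv), and the traces give (v) and (vi)). The paper itself offers no proof of Lemma 1, simply citing \cite{h2,m2}, and your computation is exactly the standard Gauss--Weingarten/Simons argument carried out in those references, with the only flow-specific point being the one you flag, namely $\nabla h\equiv 0$.
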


Since $M_0$ is strictly convex, the curvature flow (1.1) preserves
the convexity of all $M_t$ as long as the solution exists \cite
{h2,m2}.
\begin{lemma}
(i) If $h_{ij}\geq 0$ at $t=0$, then it remains so on $[0,
T_{\max})$.\\[-3mm]

(ii) If initially $H>0$ and $h_{ij}\geq \varepsilon Hg_{ij}$ for
some $\varepsilon \in (0, \frac 1n]$, then $h_{ij}\geq \varepsilon
Hg_{ij}$ remains true, with the same $\varepsilon $ on $[0,
T_{\max})$.
\end{lemma}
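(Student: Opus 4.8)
The plan is to derive both assertions from Hamilton's maximum principle for symmetric $2$-tensors on the compact manifold $M^n$, applied with respect to the evolving metric $g_{ij}$ and using the evolution equations of Lemma 1. Recall that this principle states: if a symmetric tensor $M_{ij}$ evolves by $\partial_t M_{ij}=\triangle M_{ij}+N_{ij}$, where $N_{ij}$ is an algebraic expression in $M_{ij}$ and $g_{ij}$ satisfying the null-eigenvector condition — whenever $M_{ij}\geq 0$ and $M_{ij}v^j=0$ one has $N_{ij}v^iv^j\geq 0$ — then $M_{ij}\geq 0$ at $t=0$ forces $M_{ij}\geq 0$ on all of $[0,T_{\max})$.

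For (i) take $M_{ij}=h_{ij}$. By Lemma 1(iv) the reaction term is $N_{ij}=(h-2H)h_{ik}h_j^k+|A|^2h_{ij}$. If $h_{ij}v^j=0$ then $h_j^kv^j=0$ as well, so both terms annihilate $v$ and $N_{ij}v^iv^j=0\geq 0$; hence convexity is preserved.

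For (ii) I first note that $H>0$ persists. By Lemma 1(v), at a spatial minimum of $H$ one has $\tfrac{d}{dt}H_{\min}\geq (H_{\min}-h)|A|^2$, and since convexity (part (i)) gives $0\leq|A|^2\leq H^2$, this is $\geq -(\sup_{[0,t]}h)\,H_{\min}^2$; as $h$ is continuous, hence bounded on compact subintervals, an ODE comparison keeps $H_{\min}(t)>0$ for every finite $t$. Now set $M_{ij}=h_{ij}-\varepsilon Hg_{ij}$. Combining Lemma 1 (i), (iv), (v) (and $\nabla g=0$) gives $\partial_t M_{ij}=\triangle M_{ij}+N_{ij}$ with
$$N_{ij}=(h-2H)h_{ik}h_j^k+|A|^2h_{ij}+\varepsilon(h-H)|A|^2g_{ij}-2\varepsilon H(h-H)h_{ij}.$$
A null eigenvector $v$ of $M_{ij}$, normalized so $|v|_g=1$, satisfies $h_{ij}v^j=\varepsilon Hv_i$, i.e.\ it is a principal direction with principal curvature $\varepsilon H$, so $h_j^kv^j=\varepsilon Hv^k$; substituting and collecting (the $|A|^2$-terms combine to $\varepsilon h|A|^2$, the $H^2$-terms to $-\varepsilon^2hH^2$) leaves
$$N_{ij}v^iv^j=\varepsilon h\left(|A|^2-\varepsilon H^2\right).$$
This is $\geq 0$: $h\geq 0$ by hypothesis, while $M_{ij}\geq 0$ makes every principal curvature $\lambda_i\geq\varepsilon H\geq 0$, whence $|A|^2=\sum\lambda_i^2\geq\tfrac1nH^2\geq\varepsilon H^2$ by Cauchy--Schwarz and $\varepsilon\leq\tfrac1n$ (when $\varepsilon=\tfrac1n$ the null eigenvector forces umbilicity at that point and the bracket vanishes). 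Hamilton's principle then yields $h_{ij}\geq\varepsilon Hg_{ij}$ on $[0,T_{\max})$.

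The only genuine work is the bookkeeping that produces $N_{ij}$ and the recognition of the cancellations reducing the null-eigenvector quantity to the manifestly nonnegative $\varepsilon h(|A|^2-\varepsilon H^2)$; that is exactly where the hypotheses are used — $h\geq 0$ neutralizes the forcing contribution and $\varepsilon\leq 1/n$ makes the pinching self-improving through $|A|^2\geq H^2/n$. This is the scheme of \cite{h2,m2}, the only new point being that an arbitrary nonnegative continuous forcing $h(t)$ is admissible.
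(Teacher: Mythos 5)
Your proof is correct: the computation of the reaction term for $M_{ij}=h_{ij}-\varepsilon Hg_{ij}$ checks out, the null-eigenvector quantity does reduce to $\varepsilon h(|A|^2-\varepsilon H^2)\geq 0$, and this is exactly the Hamilton tensor-maximum-principle argument of \cite{h2,m2} that the paper invokes without writing out (it states Lemma 2 with only a citation). The one remark worth adding is that your preliminary step preserving $H>0$ is not actually needed for the null-eigenvector condition, since $|A|^2\geq\frac1nH^2\geq\varepsilon H^2$ holds by Cauchy--Schwarz regardless of the sign of $H$.
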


This leads to the following consequence of convexity \cite {h1}
\begin{lemma} If initially $H>0$ and $h_{ij}\geq \varepsilon Hg_{ij}$ for
some $\varepsilon \in (0, \frac 1n]$ then\\[-2mm]

(i) $H\emph{tr}(A^3)-|A|^4 \geq n\varepsilon ^2H^2 (|A|^2-\frac
1nH^2)$.\\[-3mm]

(ii) $|H\nabla _ih_{kl}-h_{kl}\nabla _iH|^2\geq \frac 12 \varepsilon
^2H^2|\nabla H|^2$.
\end{lemma}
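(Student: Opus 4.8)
The plan is to prove both inequalities pointwise, as purely algebraic consequences of the pinching hypothesis $h_{ij}\geq\varepsilon Hg_{ij}$ (which by Lemma 2 is preserved along the flow). Fix $p\in M$ and choose an orthonormal frame diagonalizing $A$, so that $g_{ij}=\delta_{ij}$ and $h_{ij}=\lambda_i\delta_{ij}$ with principal curvatures $\lambda_i$; since $H=\sum_i\lambda_i>0$, the hypothesis gives $\lambda_i\geq\varepsilon H>0$ for every $i$.

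For $(i)$ I would first record the elementary identities
\[
H\operatorname{tr}(A^3)-|A|^4=\tfrac12\sum_{i,j}\lambda_i\lambda_j(\lambda_i-\lambda_j)^2,\qquad
|A|^2-\tfrac1nH^2=\tfrac1{2n}\sum_{i,j}(\lambda_i-\lambda_j)^2,
\]
obtained by expanding $H\operatorname{tr}(A^3)=(\sum_i\lambda_i)(\sum_j\lambda_j^3)$ and $|A|^4=(\sum_i\lambda_i^2)^2$ and symmetrizing in $i,j$. Substituting these, the claimed inequality becomes
\[
\sum_{i,j}\bigl(\lambda_i\lambda_j-\varepsilon^2H^2\bigr)(\lambda_i-\lambda_j)^2\geq 0,
\]
which holds term by term because $\lambda_i\lambda_j\geq(\varepsilon H)(\varepsilon H)=\varepsilon^2H^2$. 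This part is routine.

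For $(ii)$, write $h_{ij;k}=\nabla_kh_{ij}$ and split, using the diagonalization,
\[
|Hh_{ij;k}-h_{ij}H_{;k}|^2=\sum_{i,k}\bigl(Hh_{ii;k}-\lambda_iH_{;k}\bigr)^2+\sum_k\sum_{i\neq j}H^2h_{ij;k}^2 .
\]
Keep the entire first (diagonal) sum and, from the second sum, retain only the terms with $k=i$; since $\nabla h$ is totally symmetric in $\mathbb{R}^{n+1}$ by Codazzi, $h_{ij;i}=h_{ii;j}$, so these retained terms contribute exactly $\sum_k\sum_{i\neq k}H^2h_{ii;k}^2$ and no component of $\nabla h$ is used twice. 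Fixing the derivative index $k$ and putting $d_i=h_{ii;k}$, $s=H_{;k}=\sum_id_i$, we obtain the lower bound $\sum_kS_k$ with
\[
S_k=\sum_i\bigl(Hd_i-\lambda_is\bigr)^2+H^2\sum_{i\neq k}d_i^2 ,
\]
a strictly convex quadratic in $(d_i)$ under the affine constraint $\sum_id_i=s$ (strict convexity uses $H>0$). Minimizing by Lagrange multipliers yields
\[
S_k\geq\frac{s^2}{2}\Bigl(\frac{(H-\lambda_k)^2}{n+1}+\sum_{j\neq k}\lambda_j^2\Bigr)\geq\frac{s^2}{2}\sum_{j\neq k}\lambda_j^2\geq\tfrac12\varepsilon^2H^2s^2 ,
\]
the last inequality using $\lambda_j\geq\varepsilon H$ for the $n-1\geq 1$ indices $j\neq k$. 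Summing over $k$ and recalling $|\nabla H|^2=\sum_kH_{;k}^2$ gives $(ii)$.

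The main obstacle is the index bookkeeping in $(ii)$: one must select the Codazzi-symmetric off-diagonal terms so that each component of $\nabla h$ enters the estimate at most once, and then verify that the restricted quadratic $S_k$ is genuinely minimized—not merely made stationary—by the Lagrange solution. The remaining computations are direct.
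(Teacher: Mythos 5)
Your proposal is correct, and it is worth noting that the paper itself gives no proof of this lemma at all: it is quoted verbatim from Huisken's paper \cite{h1} (Lemma~2.3 there), so the only meaningful comparison is with Huisken's original argument. For part $(i)$ your symmetrization identities and the termwise bound $\lambda_i\lambda_j\geq\varepsilon^2H^2$ are exactly Huisken's proof. For part $(ii)$ you follow the same skeleton --- diagonalize $A$, split $|H\nabla_ih_{kl}-h_{kl}\nabla_iH|^2$ into diagonal and off-diagonal pieces, and use the Codazzi symmetry $h_{ij;i}=h_{ii;j}$ (valid since the ambient space is flat) to convert the retained off-diagonal terms into $\sum_k\sum_{i\neq k}H^2h_{ii;k}^2$ --- and your bookkeeping is sound: the retained ordered triples $(i,j,i)$, $i\neq j$, are disjoint from the diagonal triples $(i,i,k)$, so no component of $\nabla A$ is counted twice. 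Where you diverge is the final step: Huisken closes with a short elementary estimate, whereas you compute the exact constrained minimum of the quadratic $S_k$ over $\{\sum_id_i=s\}$; strict convexity (from $H>0$) does guarantee the Lagrange point is the global minimum, and your closed form
\[
S_k\geq\frac{s^2}{2}\Bigl(\frac{(H-\lambda_k)^2}{n+1}+\sum_{j\neq k}\lambda_j^2\Bigr)
\]
checks out. This buys you a marginally sharper conclusion, namely the constant $\tfrac{n-1}{2}\varepsilon^2H^2|\nabla H|^2$ (using $\lambda_j\geq\varepsilon H$ for the $n-1$ indices $j\neq k$ and $n\geq2$), of which the stated $\tfrac12\varepsilon^2H^2|\nabla H|^2$ is a special case; the cost is a heavier computation than strictly necessary. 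No gaps.
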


Let $|M|$ be the area of $M$, and $|V|$ the volume of the region
$V$ contained inside $M$. Lemma 2 implies that every solution of
(1.1) is a compact, convex hypersurface, therefore we have the
following relations between $|V|$ and $|M|$ by Aleksandrov-Fenchel
inequality and divergence theorem (see Theorem 2.3 in \cite {m2})
\begin{lemma}
Let $M$ be a compact and convex hypersurface embedded into
${\mathbb R}^{n+1}$ satisfying $H>0$ and $h_{ij}\geq \varepsilon Hg_{ij}$,
for some $\varepsilon \in (0, \frac 1n]$. Then there exists a
constant $c_1$ depending on $n$ and $\varepsilon$ such that
$$c_1^{-1}|M|^{\frac {n+1}{n}}\leq |V|\leq c_1|M|^{\frac {n+1}{n}}.$$
\end{lemma}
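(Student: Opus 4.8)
My plan is to prove the two inequalities separately, since only the lower bound uses the pinching hypothesis.

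For the upper bound $|V|\le c_1|M|^{\frac{n+1}{n}}$ I would simply invoke the classical isoperimetric inequality for the convex body $V$ enclosed by $M$. Equivalently, it is the tail end of the Aleksandrov--Fenchel chain: writing $W_0(V)=|V|$, $W_1(V)=\frac{1}{n+1}|M|$, and in general $W_{k+1}(V)=\frac{1}{(n+1)\binom nk}\int_M E_k\,d\mu$ (so that $W_{n+1}(V)=\omega_{n+1}$, the volume of the unit ball in ${\mathbb R}^{n+1}$), the Aleksandrov--Fenchel inequalities $W_k^2\ge W_{k-1}W_{k+1}$ make $k\mapsto W_k$ log-concave; telescoping the ratios $W_{k+1}/W_k$ over $k=0,\dots,n$ and using that they are decreasing gives $W_1^{\,n+1}\ge W_0^{\,n}W_{n+1}$, i.e.\ $|V|^n\le\frac{|M|^{n+1}}{(n+1)^{n+1}\omega_{n+1}}$. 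This uses only convexity of $M$ and produces a constant depending on $n$ alone.

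For the lower bound, convexity alone does not suffice (a thin slab has small volume and large area), and it is exactly this degeneration that the pinching $h_{ij}\ge\varepsilon Hg_{ij}$ excludes. I would reduce everything to the geometric statement, contained in \cite{m2} (Theorem 2.3), that under this pinching $M$ lies between two \emph{concentric} spheres $B_{\rho_-}(p)\subseteq V\subseteq B_{\rho_+}(p)$ with $\rho_+\le C(n,\varepsilon)\,\rho_-$; once this is granted the conclusion is immediate, since monotonicity of surface area under inclusion of convex bodies gives $|M|\le|\partial B_{\rho_+}(p)|=(n+1)\omega_{n+1}\rho_+^{\,n}\le(n+1)\omega_{n+1}C^n\rho_-^{\,n}$, whence $\rho_-\ge\big(|M|/((n+1)\omega_{n+1}C^n)\big)^{1/n}$ and therefore $|V|\ge|B_{\rho_-}(p)|=\omega_{n+1}\rho_-^{\,n+1}\ge c_1^{-1}|M|^{\frac{n+1}{n}}$ for a suitable $c_1=c_1(n,\varepsilon)$; enlarging $c_1$ to also dominate the constant from the upper bound finishes the lemma.

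The content is thus entirely in the eccentricity bound $\rho_+\le C(n,\varepsilon)\rho_-$, and this is the step I expect to be the main obstacle — curvature pinching alone is only an algebraic, scale-invariant condition on the $\lambda_i$, so turning it into control of the shape of $V$ is not elementary. The ingredients that drive it, and that I would assemble along the lines of \cite{m2} if I had to reprove rather than cite it, are: the Gauss--Bonnet / degree identity $\int_M K\,d\mu=|{\mathbb S}^n|$ together with $K=\prod_i\lambda_i\ge(\varepsilon H)^n$, which yields $\int_M H^n\,d\mu\le\varepsilon^{-n}|{\mathbb S}^n|$ and hence, via H\"older and Minkowski's inequality $\int_M H\,d\mu\ge n|{\mathbb S}^n|^{1/n}|M|^{\frac{n-1}{n}}$, two-sided bounds $c(n,\varepsilon)|M|^{1-\frac kn}\le\int_M E_k\,d\mu\le C(n,\varepsilon)|M|^{1-\frac kn}$ for $0\le k\le n$; and the Minkowski integral formulas $\int_M\langle X-p,{\bf v}\rangle E_k\,d\mu=\frac{n-k+1}{k}\int_M E_{k-1}\,d\mu$ together with the divergence-theorem identity $(n+1)|V|=\int_M\langle X-p,{\bf v}\rangle\,d\mu$ and the support-function bound $\langle X-p,{\bf v}\rangle\ge\rho_-$ with $p$ the incenter. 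Everything else in the argument — the isoperimetric inequality and the comparisons with round spheres — is routine.
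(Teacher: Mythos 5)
Your argument is correct, but it is organized quite differently from the paper's, which offers no proof at all: the authors simply invoke Theorem 2.3 of \cite{m2}, whose proof runs entirely through the Aleksandrov--Fenchel inequalities and the Minkowski/divergence-theorem identities. Your upper bound coincides with that route (it is just the isoperimetric inequality, and you correctly observe that pinching is not needed there). Your lower bound, however, takes a genuinely different and more geometric path: you reduce it to the eccentricity estimate $\rho_+\le C(n,\varepsilon)\rho_-$ and then compare with round spheres. The point worth stressing is that this eccentricity estimate is not really ``the main obstacle'' you fear: it is exactly Corollary 1 of the paper (Andrews' width pinching from \cite{an}), since $h_{ij}\ge\varepsilon Hg_{ij}$ gives $\lambda_{\min}\ge\varepsilon H\ge\varepsilon\lambda_{\max}$, i.e.\ the pointwise pinching $\lambda_{\max}\le\varepsilon^{-1}\lambda_{\min}$ with $c_2=\varepsilon^{-1}$. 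The only cosmetic repair needed is that Corollary 1 gives inner and outer balls that need not be concentric; but since $\mathrm{diam}(V)\le 2r_{out}$, one has $V\subseteq B_{2r_{out}}(p)$ for $p$ the incenter, so the concentric version holds with constant $2c_3$ and your chain of inequalities goes through verbatim. With that observation your proof is entirely self-contained modulo results already quoted in Section 2, which is arguably cleaner than the paper's bare citation; your closing sketch of the Gauss--Bonnet/Minkowski-formula machinery essentially reconstructs McCoy's original argument and can be dispensed with.
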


In order to study (1.1), the following facts of convex hypersurfaces
will be used.

Recall that the second fundamental form of a convex hypersurface
$X:M^n\longrightarrow {\mathbb R}^{n+1}$ is positive definite,
and the outer unit normal vector field ${\bf v}$ to the
hypersurface defines the Gauss map ${\bf v}: M^n\longrightarrow
{\mathbb S}^n$. Since the hypersurface is convex and compact,
i.e. the Gauss map is everywhere non-degenerate, we use the Gauss
map to reparametrize the convex hypersurface (see \cite {an,u,z})
$$X=X({\bf v}^{-1}(z)), \quad z\in {\mathbb S}^n.$$
Then the support function is defined as
$$\mathcal{Z}(z)=\langle z, X({\bf v}^{-1}(z))\rangle, \quad z\in
{\mathbb S}^n.$$

If we denote by $\overline{\nabla}$ and $\overline{g}$ the covariant
derivative and standard metric on ${\mathbb S}^n$, the
hypersurface can be represented by the support function
$$X(z)=\mathcal{Z}(z)z+\overline {\nabla}\mathcal{Z}(z).$$
The second fundamental form now can be calculated directly from the
support function as follows
\begin{equation}
h_{ij}=\overline {\nabla}_i\overline {\nabla
}_j\mathcal{Z}+\mathcal{Z}\overline g_{ij} \quad \mbox{ on }
{\mathbb S}^n,
\end{equation}
and the metric is given by
\begin{equation}
g_{ij}=h_{ik}\overline g^{kl}h_{lj}.
\end{equation}

 The width function of the hypersurface
$X$ is defined by
$$w(z)=\mathcal{Z}(z)+\mathcal{Z}(-z), \quad z\in {\mathbb S}^n.$$

In order to control the width of a convex hypersurface, we cite a
theorem of Andrews \cite {an}
\begin{lemma}
Let $M$ be a smooth, compact and convex hypersurface in ${\mathbb
R}^{n+1}$. Suppose that there exists a positive constant $c_2$ such
that $M$ satisfies the pointwise pinching estimate $\lambda
_{\max}(x)\leq c_2\lambda _{\min}(x)$, for every $x\in M$. Then the
following estimate holds
$$w_{\max}\leq c_2w_{\min},$$
where $\lambda _{\max}(x)$ and $\lambda _{\min}(x)$ are the largest
and smallest principal curvatures of $M$ at $x$ respectively, and
$w_{\max}=\max_{z\in {\mathbb S}^n}w(z)$ and $w_{\min}=\min_{z\in
{\mathbb S}^n}w(z)$.
\end{lemma}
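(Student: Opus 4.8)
\medskip
\noindent\textbf{Proof proposal.}
The plan is to pass to the Gauss--map parametrization and read everything off the support function. Write $\mathcal{Z}$ for the support function of $M$ and set $r_{ij}:=\overline\nabla_i\overline\nabla_j\mathcal{Z}+\mathcal{Z}\,\overline g_{ij}$ (this is the $h_{ij}$ of $(2.1)$); the eigenvalues of $r_{ij}$ with respect to $\overline g_{ij}$ are exactly the principal radii of curvature $\tau_i(z)=1/\lambda_i$, so the hypothesis $\lambda_{\max}(x)\le c_2\lambda_{\min}(x)$ is equivalent to the pointwise bound $\tau_{\max}(z)\le c_2\tau_{\min}(z)$ on $\mathbb{S}^n$. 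The width $w(z)=\mathcal{Z}(z)+\mathcal{Z}(-z)$ is a smooth, positive, \emph{even} function on $\mathbb{S}^n$, and its associated ``radii'' matrix is
\[
W_{ij}:=\overline\nabla_i\overline\nabla_j w+w\,\overline g_{ij}=r_{ij}(z)+(\sigma^{*}r)_{ij}(z),
\]
where $\sigma$ is the antipodal map; since $\sigma$ is an isometry of $(\mathbb{S}^n,\overline g)$, the eigenvalues of $(\sigma^{*}r)(z)$ are those of $r(-z)$, and Weyl's inequalities give, at every $z$,
\[
\lambda_{\max}(W(z))\le \tau_{\max}(z)+\tau_{\max}(-z)\le c_2\big(\tau_{\min}(z)+\tau_{\min}(-z)\big)\le c_2\,\lambda_{\min}(W(z)).
\]
Thus $w$ is an even positive function on $\mathbb{S}^n$ with $W>0$ obeying the same pointwise pinching (geometrically, $w$ is the support function of the difference body, $w_{\max}$ its diameter and $w_{\min}$ its minimal width), and it suffices to prove $w_{\max}\le c_2 w_{\min}$ for such $w$.

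Next I would exploit the ODE coming from the structure of $W$. For a unit-speed great circle $\gamma$ on $\mathbb{S}^n$, put $\phi=w\circ\gamma$; using $\gamma''=-\gamma$ one gets
\[
\phi''(s)+\phi(s)=W(\gamma(s))\big(\gamma'(s),\gamma'(s)\big),
\]
and $\phi$ is $\pi$-periodic because $w$ is even; the right-hand side lies in $[\lambda_{\min}(W(\gamma(s))),\lambda_{\max}(W(\gamma(s)))]$. One compares $\phi$, on great circles through the maximum point $z_1$ of $w$ (and through the minimum point $z_0$), with the explicit solutions $a+b\cos s$ of $\phi''+\phi=\mathrm{const}$: at $z_1$ one has $\phi'=0$, $\phi''\le 0$, hence $W(z_1)\le w_{\max}\overline g$; and for a supersolution $\eta$ of $\eta''+\eta\le 0$ with $\eta(0)=\eta'(0)=0$ the function $s\mapsto \eta(s)/\sin s$ is nonincreasing on $(0,\pi)$ with limit $0$ at $0^{+}$, forcing $\eta\le 0$. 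Evaluating this at $s=\pi$ (using $\phi(\pi)=\phi(0)$) bounds $w_{\max}$ above, and symmetrically $w_{\min}$ below, by the eigenvalues of $W$ met along the chosen circle. Combined with the pointwise pinching of $W$, this produces an estimate of the shape $w_{\max}\le c_2\cdot(\text{largest eigenvalue of }W\text{ along }\gamma)$ and $w_{\min}\ge(\text{smallest eigenvalue of }W\text{ along }\gamma)$, so $w_{\max}\le c_2w_{\min}$ will follow once one knows the eigenvalues of $W$ cannot drift by more than a factor $c_2$ between the relevant points.

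That last point is the heart of the matter and is where more than the pointwise hypothesis is needed: a generic positive matrix field on $\mathbb{S}^n$ with pointwise condition number $\le c_2$ can have arbitrarily large \emph{global} condition number (take $\rho_1\overline g$ near one point and $\rho_2\overline g$ near another), so the conclusion must use that $W$ arises from a single scalar $w$, i.e.\ the Codazzi-type rigidity $\overline\nabla_k W_{ij}=\overline\nabla_i W_{kj}$ (equivalently $W=\overline\nabla^2 w+w\overline g$). I would use this rigidity either by propagating the $\cos s$-comparison along a chain of great circles joining $z_0$ and $z_1$ rather than a single one, or by a maximum-principle argument applied to a pinching quantity such as $\lambda_{\max}(W)-c_2\lambda_{\min}(W)$ (or $\lambda_{\max}(W)/\lambda_{\min}(W)$) built from $w$, whose extrema can be analyzed using $\overline\nabla W$ symmetric. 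The main obstacle is precisely reconciling the purely pointwise pinching assumption with the genuinely global conclusion on widths; by contrast the reduction to the even support function $w$ and the great-circle ODE are routine once the set-up is in place.
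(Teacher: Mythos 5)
First, note that the paper does not prove this statement at all: Lemma 5 is explicitly \emph{cited} as a theorem of Andrews \cite{an}, so there is no in-paper argument to match, and any blind attempt must stand on its own as a complete proof. Yours does not. Your preliminary reductions are all correct and well chosen: the identification of the eigenvalues of $\overline\nabla_i\overline\nabla_j\mathcal{Z}+\mathcal{Z}\overline g_{ij}$ with the principal radii $\tau_i=1/\lambda_i$, the passage to the even function $w$ and the matrix $W=\overline\nabla^2w+w\overline g$ with the same pinching constant via Weyl's inequalities, the great-circle ODE $\phi''+\phi=W(\gamma',\gamma')$, and the Sturm-type monotonicity of $\eta/\sin s$. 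But the decisive step --- converting the \emph{pointwise} bound $\lambda_{\max}(W)\le c_2\lambda_{\min}(W)$ into the \emph{global} inequality $w_{\max}\le c_2w_{\min}$ --- is exactly the step you leave open, and your own counterexample ($\rho_1\overline g$ near one point, $\rho_2\overline g$ near another) shows that this is the entire content of the lemma, not a technicality.

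Neither of the two repairs you sketch closes the gap as stated. Chaining the $\cos s$-comparison along several great circles multiplies the constants, yielding $w_{\max}\le c_2^k w_{\min}$ for a chain of length $k$ rather than the asserted sharp constant $c_2$. A single great circle through the extremal directions $z_0,z_1$ does not work either: the variation-of-parameters identity gives $w(e)=\int_0^\pi\sin(s)\,W_{\gamma(s)}(\gamma',\gamma')\,ds$ with the weight $\sin(s)$ centered at $e$, so the representations of $w_{\max}$ and $w_{\min}$ involve kernels supported on different half-circles and cannot be compared integrand-by-integrand using only the pointwise pinching. The maximum-principle suggestion for $\lambda_{\max}(W)-c_2\lambda_{\min}(W)$ is likewise only named, not set up: there is no evolution here, the quantity is not smooth where eigenvalues cross, and no elliptic inequality for it is derived from the Codazzi symmetry of $\overline\nabla W$. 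In short, the proposal is an honest and well-informed reduction to the hard point, followed by an admission that the hard point is unresolved; as a proof of Lemma 5 it is incomplete, and for the purposes of this paper the correct course is the one the authors take, namely to invoke Andrews' theorem from \cite{an}.
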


By this lemma, a pinching estimate on the inner radius $r_{in}$
and outer radius $r_{out}$ immediately follows \cite {an}
\begin{corollary}
Let $M$ be a smooth, compact and convex hypersurface in ${\mathbb
R}^{n+1}$. Suppose that there exists a positive constant $c_2$ such
that $M$ satisfies the pointwise pinching estimate $\lambda
_{\max}(x)\leq c_2\lambda _{\min}(x)$, for every $x\in M$. Then
there exists a constant $c_3$ such that
$$r_{out}\leq c_3r_{in}.$$
\end{corollary}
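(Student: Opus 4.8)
The plan is to derive the corollary directly from Lemma 5 together with the two elementary sandwich estimates that relate the width of a convex body to its inner and outer radii. First I would recall the geometric facts: for any smooth compact convex hypersurface $M$ bounding a convex body, the width function $w$ satisfies $2r_{in}\leq w_{\min}$ and $w_{\max}\leq 2r_{out}$. The first inequality holds because a ball of radius $r_{in}$ fits inside the body, so every pair of parallel supporting hyperplanes is separated by at least $2r_{in}$; the second holds because the body is contained in a ball of radius $r_{out}$, so no pair of parallel supporting hyperplanes can be separated by more than $2r_{out}$. These are classical and I would state them with a one-line justification rather than a full proof.

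Next I would combine these with the width pinching of Lemma 5. Under the hypothesis $\lambda_{\max}(x)\leq c_2\lambda_{\min}(x)$ for all $x\in M$, Lemma 5 gives $w_{\max}\leq c_2 w_{\min}$. Chaining the inequalities,
\[
2r_{out}\geq w_{\max},\qquad w_{\max}\leq c_2 w_{\min},\qquad w_{\min}\geq 2r_{in},
\]
but I need the chain in the right direction. Actually the correct chain is $2r_{in}\leq w_{\min}$ and $w_{\max}\leq 2r_{out}$, so I want to bound $r_{out}$ above by $r_{in}$. I would instead argue: pick a point realizing the outer radius — more precisely, use that $r_{out}\leq C_n\, w_{\max}$ and $w_{\min}\leq C_n' r_{in}$ for dimensional constants, or better, observe the elementary bounds $r_{out}\leq \frac{?}{}$. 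The clean route is: the width in any direction is at most $2r_{out}$ is false in general for the minimum, but $w_{\min}\le \mathrm{diam}$ and one has $r_{out}\le \mathrm{diam}$; the safe, standard estimates are $w_{\min}\ge 2r_{in}$ and $w_{\max}\le 2 r_{out}$, and additionally $r_{out}\le \sqrt{n/(2(n+1))}\,\mathrm{diam}\le \sqrt{n/(2(n+1))}\,w_{\max}$ by Jung's theorem, while $w_{\min}\ge 2 r_{in}$. Hence
\[
r_{out}\;\le\; \sqrt{\tfrac{n}{2(n+1)}}\;w_{\max}\;\le\; \sqrt{\tfrac{n}{2(n+1)}}\;c_2\,w_{\min}\;\le\;\sqrt{\tfrac{n}{2(n+1)}}\,c_2\cdot 2\, r_{in},
\]
which is exactly $r_{out}\le c_3 r_{in}$ with $c_3 = 2c_2\sqrt{n/(2(n+1))}$, a constant depending only on $n$ and $c_2$.

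I expect the main subtlety to be the clean passage between the width function and the radii $r_{in}$, $r_{out}$: the inequality $w_{\min}\ge 2r_{in}$ is immediate, but controlling $r_{out}$ from above by $w_{\max}$ requires a genuine (if classical) geometric input such as Jung's theorem, since a convex body can have all widths comparable yet still one must know its circumradius is controlled by its diameter. Once that is in hand the rest is a three-step chaining of inequalities as above, and $c_3$ comes out explicitly. I would therefore organize the write-up as: (1) state the two radius–width bounds with brief justification and invoke Jung's theorem, (2) apply Lemma 5, (3) chain the estimates to read off $c_3$, noting its dependence only on $n$ and $c_2$. If one prefers to avoid citing Jung's theorem, an alternative is to note that $r_{out}$ and $w_{\max}$ are both comparable to the diameter up to dimensional constants for convex bodies, which suffices equally well.
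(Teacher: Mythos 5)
Your overall strategy --- chain $r_{out}\lesssim w_{\max}\leq c_2 w_{\min}\lesssim r_{in}$ using Lemma 5 --- is the right one, and it is essentially what Andrews does in \cite{an} (the paper itself offers no proof, deferring entirely to that reference). But your final chain contains a genuine error: the last step uses $w_{\min}\leq 2r_{in}$, which is the reverse of the true elementary inequality. As you yourself correctly observed at the start, a ball of radius $r_{in}$ inside the body forces every width to be at least $2r_{in}$, i.e.\ $w_{\min}\geq 2r_{in}$; an equilateral triangle already shows the reverse inequality fails. What you actually need to close the chain is an upper bound $w_{\min}\leq C(n)\,r_{in}$, equivalently a lower bound on the inradius in terms of the minimal width. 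That is Steinhagen's inequality (the ``dual'' of Jung's theorem), or Lemma 5.4 of \cite{an}, which gives $r_{in}\geq c(n)\,w_{\min}$ for a dimensional constant; it is classical but not trivial, and it is the genuinely nonelementary input here --- not the bound $r_{out}\leq C(n)\,w_{\max}$, which you flagged as the main subtlety. Indeed that leg needs no Jung's theorem at all: for a convex body $w_{\max}=\mathrm{diam}$, and trivially $r_{out}\leq\mathrm{diam}$ since the body lies in the ball of radius $\mathrm{diam}$ centred at any of its points. (Your Jung constant is also off by one in the dimension: the ambient space is $\mathbb{R}^{n+1}$, so the constant is $\sqrt{(n+1)/(2(n+2))}$, though this is immaterial.)

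With Steinhagen in hand the argument is correct: $r_{out}\leq w_{\max}\leq c_2\,w_{\min}\leq c_2\,c(n)^{-1}\,r_{in}$, giving $c_3=c_2\,c(n)^{-1}$ depending only on $n$ and $c_2$. So the fix is to replace the false inequality $w_{\min}\leq 2r_{in}$ by a citation of (or an elementary proof of) the inradius--minimal-width comparison; as written, the proof does not go through.
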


 For a convex hypersurface $M^n$, we can also
parametrize it as a graph over the unit sphere ${\mathbb S}^n$
(cf. \cite {an,g}, see also \cite {z}). Let
$$\pi (x)=\frac {X(x)}{|X(x)|}:M^n\longrightarrow {\mathbb S}^n,$$
then we write the solution $M_t$ to equation (1.1) as a radial graph
\begin{equation}
X(x,t)=r(z,t)z: {\mathbb S}^n\longrightarrow {\mathbb
R}^{n+1},
\end{equation}
where $r(z,t)=|X(\pi ^{-1}(z),t)|$. We calculate the metric of
$M_t$ in terms of $r$ as
$$
g_{ij}=r^2\overline{g}_{ij}+\overline{\nabla }_ir\overline{\nabla
}_jr,
$$
and its inverse is
\begin{equation}
g^{ij}=r^{-2}\left(\overline{g}^{ij}-\frac {\overline{\nabla
}^ir\overline{\nabla }^jr}{r^2+|\overline{\nabla}r|^2 }\right).
\end{equation}
The outer unit normal vector and the second fundamental form of
$M_t$ in terms of $r$ are given respectively by
\begin{equation}
{\bf v}=\frac 1{\sqrt{r^2+|\overline{\nabla }r|^2}}(rz-\overline
{\nabla }r),
\end{equation}
and
\begin{equation}
h_{ij}=\frac 1{\sqrt{r^2+|\overline{\nabla
}r|^2}}(-r\overline{\nabla }_i\overline{\nabla
}_jr+2\overline{\nabla }_ir\overline{\nabla
}_jr+r^2\overline{g}_{ij}).
\end{equation}

\section{The Normalized Equation}
\label{section:3}

\renewcommand{\thesection}{\arabic{section}}
\renewcommand{\theequation}{\thesection.\arabic{equation}}
\setcounter{equation}{0}

\indent

The solution of the curvature flow (1.1) may shrink to a point if
$h$ is small enough (e.g. $h=0$ \cite {h1}), or expand to infinity
if $h$ is large enough (e.g. $h$ is a constant and $h>\sup_{x\in
M^n}H(x,0)$). The solution can also converge to a smooth
hypersurface, for some special initial hypersurface and $h$ (e.g.
the volume preserving mean curvature flow \cite {h2}, the surface
area preserving mean curvature flow \cite {m1}). In order to see
this, we normalize the equation (1.1) by keeping some geometrical
quantity fixed, for example as in \cite {h1} the total area of the
hypersurfaces $M_t$. As that mentioned in section 1, multiplying
the solution $X$ of (1.1) at each time $0\leq t<T_{\max}$ with a
positive constant $\psi (t)$ such that the total area of the
hypersurfaces $\widetilde{M}_t$ given by
$$\widetilde{X}(x, t)=\psi (t)X(x,t)$$
  has total area equal to
$|M_0|$, the area of $M_0$
\begin{equation}\int_{\widetilde{M}_t}d\widetilde{\mu}_t=|M_0|,
\qquad 0\leq t<T_{\max}. \end{equation}
 Then we introduce a new time variable $\tilde{t}(t)=\int
_0^t\psi^2(\tau)d\tau$, such that $\frac {\partial
\tilde{t}}{\partial t}=\psi^2$.

As in \cite {h1,an}, for a geometric quantity $P$ on $M_t$, we
denote by $\widetilde{P}$ the corresponding quantity on the rescaled
hypersurface $\widetilde{M}_{\tilde{t}}$. By direct calculation we
have
\begin{eqnarray*}
\widetilde{g}_{ij}=\psi^2g_{ij},& \widetilde{h}_{ij}=\psi
h_{ij},\\
\widetilde{H}=\psi^{-1}H,& \quad |\widetilde{A}|^2=\psi ^{-2}|A|^2,\\
d\widetilde{\mu}=\psi ^nd\mu,&\widetilde{w}=\psi w,
\end{eqnarray*}
and so on. If we differentiate (3.1) for time $t$, we obtain
\begin{eqnarray*}\psi ^{-1}\frac {\partial \psi}{\partial t}=\frac
1n \frac {\int _M(H-h)Hd\mu}{\int _{M}d\mu}=\frac 1n \theta.
\end{eqnarray*}

Now by differentiating $\widetilde{X}$ with respect to $\tilde{t}$,
we derive the normalized evolution equation for a different maximal
time interval $0\leq \tilde{t}<\widetilde{T}_{\max}$
\begin{eqnarray} \left\{\begin{array}{l}
\frac{\partial }{\partial \tilde{t}} \widetilde{X}(x,\tilde{t})
=\{\widetilde{h}(\tilde{t})-\widetilde{H}(x, \tilde{t})\}{\bf
\widetilde{v}}(x,\tilde{t})+{\frac 1n}
\widetilde{\theta}(\tilde{t})\widetilde{X}(x, \tilde{t}),
\\[2mm]
\widetilde{X}(\cdot, 0)= X_0, \end{array}\right.
\end{eqnarray}
where $\widetilde{h}=\psi ^{-1}h$, $\widetilde{\theta}=\psi
^{-2}\theta$ and $\theta$ is given by
$$\theta=-\frac {\int _{M}(h-H)Hd\mu}{\int _Md\mu}.$$

Since $M_t$ is convex, and $\widetilde{M}_{\tilde{t}}$ is just a
rescaling of $M_t$, therefore which is also convex, we can write
$M_t$ or $\widetilde{M}_{\tilde{t}}$ to be a graph over a unit
sphere as in (2.3). By (1.1), (2.4)$\sim$(2.6) we have the evolution
equation for $r(t)$
\begin{equation}
\frac {\partial r}{\partial t}=\frac
hr\sqrt{r^2+|\overline{\nabla}r|^2}+
r^{-3}\left(\overline{g}^{ij}-\frac {\overline{\nabla
}^ir\overline{\nabla }^jr}{ r^2+|\overline{\nabla}r|^2
}\right)\left(r\overline{\nabla }_i\overline{\nabla
}_jr-2\overline{\nabla }_ir\overline{\nabla
}_jr-r^2\overline{g}_{ij}\right).
\end{equation}
Then $\tilde{r}=\psi r$ satisfies the evolution equation
\begin{eqnarray}
\frac {\partial \tilde{r}}{\partial \tilde{t}}&=&\frac
{\widetilde{\theta}}{n} \tilde{r}+\frac
{\widetilde{h}}{\tilde{r}}\sqrt{\tilde{r}^2
+|\overline{\nabla}\tilde{r}|^2}\nonumber\\
&&+ \tilde{r}^{-3}\left(\overline{g}^{ij}-\frac {\overline{\nabla
}^i\tilde{r}\overline{\nabla }^j\tilde{r}}{
\tilde{r}^2+|\overline{\nabla}\tilde{r}|^2 }\right)
\left(\tilde{r}\overline{\nabla }_i\overline{\nabla
}_j\tilde{r}-2\overline{\nabla }_i\tilde{r}\overline{\nabla
}_j\tilde{r}-\tilde{r}^2\overline{g}_{ij}\right).
\end{eqnarray}

In the remainder of this section, we will estimate the outer and
inner radii of the normalized hypersurfaces $\widetilde{M}$. First
we see that since at each time the whole configuration of
$\widetilde{M}$ is only dilated by a constant factor $\psi $, the
solutions to (3.2) are compact and convex hypersurfaces, and Lemma 2
still holds. This means that
$$\widetilde{h}_{ij}\geq \varepsilon
\widetilde{H}\widetilde{g}_{ij},$$ for some $\varepsilon \in (0,
\frac 1n]$. The hypersurface $\widetilde{M}$ encloses a region
$\widetilde{V}$ of volume $|\widetilde{V}|$. Then by Lemma 4
\begin{equation}c_1^{-1}|\widetilde{M}|^{\frac {n+1}{n}}\leq
|\widetilde{V}|\leq c_1|\widetilde{M}|^{\frac {n+1}{n}}.
\end{equation}
Since $|\widetilde{V}|$ is controlled by the volume of its inner and
outer sphere
$$c_4\tilde{r}_{in}^{n+1}\leq |\widetilde{V}|\leq
c_4\tilde{r}_{out}^{n+1},$$ for a constant $c_4$, we obtain the
following estimate by the fixed total area of $\widetilde{M}$ by
(3.5)
\begin{equation}
\tilde{r}_{out}\geq c_5 \mbox { and } \tilde{r}_{in}\leq c_6,
\end{equation}
for some two positive constants $c_5$ and $c_6$.

By Corollary 1 and (3.6) we have
\begin{proposition} The lower bound of the inner radius and the
upper bound of the
outer radius of $\widetilde{M}_{\tilde{t}}$ are all uniformly
bounded, i.e.
$$c_7^{-1}\leq \tilde{r}_{in}\leq \tilde{r}_{out}\leq c_7$$
for some constant $c_7$.
\end{proposition}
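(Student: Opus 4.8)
The plan is to chain together the ingredients already assembled in the excerpt rather than to do anything genuinely new. The target inequality has two halves: a lower bound $c_7^{-1}\le \tilde r_{in}$ and an upper bound $\tilde r_{out}\le c_7$. First I would invoke Lemma 2, which survives rescaling (as noted just above the Proposition), to obtain the uniform pinching $\tilde h_{ij}\ge\varepsilon\tilde H\tilde g_{ij}$ with a fixed $\varepsilon\in(0,\tfrac1n]$ on all of $\widetilde M_{\tilde t}$. In terms of principal curvatures this says $\lambda_{\min}\ge\varepsilon\tilde H=\varepsilon(\lambda_1+\dots+\lambda_n)\ge\varepsilon\lambda_{\max}$, i.e.\ $\lambda_{\max}(x)\le\varepsilon^{-1}\lambda_{\min}(x)$ pointwise on each $\widetilde M_{\tilde t}$. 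Thus Lemma 5 and hence Corollary 1 apply with $c_2=\varepsilon^{-1}$, giving a \emph{time-independent} constant $c_3$ with $\tilde r_{out}\le c_3\,\tilde r_{in}$ for every $\tilde t$.

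Next I would feed in the area normalization. Since every $\widetilde M_{\tilde t}$ satisfies $H>0$ and $\tilde h_{ij}\ge\varepsilon\tilde H\tilde g_{ij}$, Lemma 4 gives $|\widetilde M|^{\frac{n+1}{n}}$ comparable to the enclosed volume $|\widetilde V|$, which is (3.5) in the text. On the other hand $|\widetilde V|$ is squeezed between the volumes of the inscribed and circumscribed balls, $c_4\tilde r_{in}^{\,n+1}\le|\widetilde V|\le c_4\tilde r_{out}^{\,n+1}$. Combining these with the fixed value $|\widetilde M|=|M_0|$ yields (3.6): there are positive constants $c_5,c_6$, depending only on $n,\varepsilon$ and $|M_0|$, with
\begin{equation}
\tilde r_{out}\ge c_5,\qquad \tilde r_{in}\le c_6,
\nonumber
\end{equation}
both holding for all $\tilde t\in[0,\widetilde T_{\max})$.

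Finally I would close the loop. The pinching $\tilde r_{out}\le c_3\tilde r_{in}$ together with $\tilde r_{in}\le\tilde r_{out}$ propagates each one-sided bound into a two-sided one: from $\tilde r_{out}\ge c_5$ we get $\tilde r_{in}\ge c_3^{-1}\tilde r_{out}\ge c_3^{-1}c_5$, and from $\tilde r_{in}\le c_6$ we get $\tilde r_{out}\le c_3\tilde r_{in}\le c_3 c_6$. Taking $c_7=\max\{c_3 c_6,\ c_3 c_5^{-1},\ 1\}$ gives the claimed chain $c_7^{-1}\le\tilde r_{in}\le\tilde r_{out}\le c_7$. The one point deserving a line of care — and the only place where anything could go wrong — is verifying that all of the constants $c_1,c_3,c_4$ are genuinely independent of $\tilde t$: $c_1$ and $c_3$ are because the pinching constant $\varepsilon$ from Lemma 2 is fixed once and for all by $M_0$, and $c_4$ is a purely dimensional constant (the volume of a Euclidean ball of radius $1$ times $n+1$, up to normalization). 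No estimate here uses a bound on $T_{\max}$, so the Proposition holds uniformly on the whole rescaled time interval; this uniformity is exactly what the later sections will need.
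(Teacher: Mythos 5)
Your argument is correct and follows the paper's own proof essentially verbatim: the persistence of the pinching from Lemma 2 under rescaling, Lemma 4 combined with the inscribed/circumscribed ball volumes and the fixed area $|M_0|$ to get $\tilde r_{out}\geq c_5$ and $\tilde r_{in}\leq c_6$, and then Corollary 1 to close the gap between $\tilde r_{in}$ and $\tilde r_{out}$. Your explicit check that $h_{ij}\geq\varepsilon Hg_{ij}$ yields the pointwise ratio bound $\lambda_{\max}\leq\varepsilon^{-1}\lambda_{\min}$ needed for Corollary 1, and that all constants are time-independent, only makes explicit what the paper leaves implicit.
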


 Now for any given time sequence $\{T_i\}$, $T_i\in [0, T_{\max})$, such
 that $T_i\rightarrow T_{\max}$ as $i\rightarrow \infty$, there
 corresponds to a sequence $\{\psi_i=\psi(T_i)\}$. By limiting
 theory, there exists at least one accumulation of this sequence.
 Denote by $\Lambda _i$ the minimal accumulation of the
 sequence $\{\psi_i=\psi(T_i)\}$. We define $\Lambda $ to be the
 infimum of $\Lambda _i$ for all possible sequences
$\{\psi_i=\psi(T_i)\}$, i.e.
\begin{eqnarray*}
\Lambda =\inf\left\{\Lambda _i|\Lambda _i \mbox { is the minimal
accumulation of a sequence
}\{\psi_i=\psi(T_i)\}\right.,\\\left.\mbox{ where }\{T_i\} \mbox{ is
any  sequence in }[0, T_{\max}) \mbox { such that } T_i\rightarrow
T_{\max} \mbox { as } i\rightarrow \infty\right\}.
\end{eqnarray*}

 Therefore by the method of extracting diagonal subsequences we have
  a subsequence, still denoted by $\{\psi_i=\psi(T_i)\}$, which converges
   to $\Lambda $ as $T_i\rightarrow T_{\max}$ (or $i\rightarrow
   \infty$), that is to say we have the following limit
  \begin{equation}
  \lim_{i\rightarrow \infty}\psi _i=\Lambda.
  \end{equation}

There are three cases in terms of the limit $\Lambda$: $\Lambda
=\infty$, $0<\Lambda <\infty$ and $\Lambda =0$. We will consider the
three cases separately in the sequel.

\section{Case (I) $\Lambda =\infty$}
\label{section:4}
\renewcommand{\thesection}{\arabic{section}}
\renewcommand{\theequation}{\thesection.\arabic{equation}}
\setcounter{equation}{0}
 \indent

In this section we consider the case $\Lambda =\infty$, and prove
Theorem 1(I).
 Since $\tilde{r}_{out}=r_{out}\psi$, we have by Proposition 1
$$\frac {c_7^{-1}}{\psi}\leq r_{out}\leq \frac
{c_7}{\psi}, $$ which implies that for the sequence $\{T_i\}$ in
last section (see (3.7)), we have a limit
\begin{equation}
\lim_{T_i\rightarrow T_{\max}} r_{out}(T_i)=0.
\end{equation}
By limiting theory, there exists a time $T^*<T_{\max}$ such that for
any $T_i\geq T^*$, $r_{out}(T_i)$ is less than any given positive
number $r^*$. By the assumption (1.3), $h(t)$ has a uniformly upper
bound $h^+$ on $[0, T_{\max})$ (We can always assume $h^+>0$ even in
the case of mean curvature flow, i.e. $h(t)=0$). We now choose $r^*$
is less than $ n/{h^+}$.

We follow an idea in \cite {an,z} to prove the following lemma which
implies that when $t$ is very near $T_{\max}$, $M_t$ is in fact
contracting.
\begin{lemma}
When $t\geq T^*$, the regions enclosed by the hypersurfaces $M_t$
are decreasing. Furthermore $T_{\max}<\infty$, and the solutions to
(1.1) converge uniformly to a point in ${\mathbb R}^{n+1}$ as
$t\rightarrow T_{\max}$.
\end{lemma}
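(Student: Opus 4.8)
The plan is to follow the enclosed volume $|V_t|$ rather than any single curvature quantity. The key is a lower bound for the \emph{average} of the mean curvature coming from convexity and a Minkowski identity, namely $\int_{M_t}H\,d\mu_t\ge n|M_t|/r_{out}(t)$; since $h(t)\le h^+$ and $r_{out}(T_i)\to 0$ along the sequence of Section~3, this makes $\frac{d}{dt}|V_t|$ strictly negative once $t$ is large, and a bootstrap using Lemma~4 and Corollary~1 shows it stays negative on all of $[T^*,T_{\max})$; the same differential inequality then forces $T_{\max}<\infty$ and $r_{out}(t)\to0$, and a final roundness (pinching) input upgrades ``volume decreasing'' to ``regions nested'' and yields collapse to a point.

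Concretely, I would first record the variation of the enclosed volume. Since the normal speed of $(1.1)$ is $h-H$,
\[\frac{d}{dt}|V_t|=\int_{M_t}\langle\partial_tX,{\bf v}\rangle\,d\mu_t=\int_{M_t}(h(t)-H)\,d\mu_t=h(t)|M_t|-\int_{M_t}H\,d\mu_t.\]
Next I bring in the Minkowski identity: integrating $\triangle\,\tfrac12|X-c|^2=n-H\langle X-c,{\bf v}\rangle$ over the closed hypersurface $M_t$ gives $\int_{M_t}H\langle X-c,{\bf v}\rangle\,d\mu_t=n|M_t|$, where $c$ is the centre of the circumscribed ball of $M_t$. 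By convexity $0<\langle X-c,{\bf v}\rangle\le r_{out}(t)$, hence $\int_{M_t}H\,d\mu_t\ge n|M_t|/r_{out}(t)$ and therefore
\[\frac{d}{dt}|V_t|\le\Big(h^+-\frac{n}{r_{out}(t)}\Big)|M_t|,\]
which is strictly negative as soon as $r_{out}(t)<n/h^+$, and in particular while $r_{out}(t)<r^*$.

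Now I would run a continuity argument. By $(4.1)$ pick a sequence point $T_{i_0}\ge T^*$ with $r_{out}(T_{i_0})$ so small that, via $|V_t|\le c_4r_{out}^{n+1}\le c_4(c_3r_{in})^{n+1}$ and Corollary~1, $|V_{T_{i_0}}|$ is below a fixed threshold; Lemma~4 and Corollary~1 also give $r_{out}(t)\le c_9|V_t|^{1/(n+1)}$ and $|M_t|\ge c_8|V_t|^{n/(n+1)}$. On any subinterval of $[T_{i_0},T_{\max})$ where $r_{out}<n/(2h^+)$ the displayed inequality shows $|V_t|$ is decreasing, hence $r_{out}(t)\le c_9|V_t|^{1/(n+1)}\le c_9|V_{T_{i_0}}|^{1/(n+1)}$, which (for $|V_{T_{i_0}}|$ small enough) is $<n/(2h^+)$; so $\{t:r_{out}(t)<n/(2h^+)\}$ is relatively open and closed in $[T_{i_0},T_{\max})$, i.e.\ holds throughout. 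Thus $|V_t|$ is strictly decreasing on $[T_{i_0},T_{\max})$ — the enclosed regions are decreasing there — and since $r_{out}(t)\le c_9|V_t|^{1/(n+1)}$ is then monotone with a subsequence tending to $0$, in fact $r_{out}(t)\to0$ as $t\to T_{\max}$. Moreover, on this interval $h^+-n/r_{out}\le-n/(2r_{out})$, so $\frac{d}{dt}|V_t|\le-\tfrac{n}{2r_{out}}|M_t|\le-c_{10}|V_t|^{(n-1)/(n+1)}$; as $\frac{n-1}{n+1}<1$ for $n\ge2$, any positive solution of this ODE reaches $0$ in finite time, so $T_{\max}<\infty$ and $|V_t|\to0$ as $t\to T_{\max}$.

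Finally, for the actual collapse to a point I would invoke the pinching $h_{ij}\ge\varepsilon Hg_{ij}$ once more: a scaling argument (the class of compact convex hypersurfaces with $r_{out}=1$ and $h_{ij}\ge\varepsilon Hg_{ij}$ is uniformly ``fat'' by $r_{in}\ge c_3^{-1}$ and Blaschke selection, so $\min H$ is bounded below on it — this is the roundness content behind Lemma~5) yields $\min_{M_t}H\ge c_{11}/r_{out}(t)$, hence $\min_{M_t}H>h^+\ge h(t)$ once $r_{out}(t)$ is small. Then the normal speed $h-H$ is strictly negative everywhere, so the regions $V_t$ are genuinely nested; being nested compact convex sets with diameter $\le 2r_{out}(t)\to0$, their intersection is a single point $p$, and $M_t\to\{p\}$ uniformly as $t\to T_{\max}$. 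The main obstacle is precisely this last pointwise lower bound $\min_{M_t}H\ge c/r_{out}$ needed to make the regions literally nested (for the volume/area monotonicity alone the Minkowski-identity average bound already suffices); it rests on the uniform-pinching input of Lemma~5 and Corollary~1 — the fact that a pinched convex body cannot become flat anywhere relative to its circumradius.
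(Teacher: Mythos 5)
Your argument up to and including $T_{\max}<\infty$ is essentially correct, but it takes a genuinely different route from the paper. The paper works pointwise: it encloses $M_{T^*}$ in a sphere $\partial B_{r^*}(O)$ with $r^*<n/h^+$, evolves that sphere under (1.1) as a barrier, and invokes the containment principle for the scalar graph equation (3.3); the implicit solution (4.3) of the barrier ODE has a bounded left-hand side, which forces $T_{\max}<\infty$, and convergence to a point is obtained by contradiction (if a fixed ball stayed inside every $M_t$, equation (3.3) would be uniformly parabolic with bounded coefficients and no singularity could form at $T_{\max}$). You instead work integrally: the Minkowski identity $\int_{M_t}H\langle X-c,{\bf v}\rangle\,d\mu_t=n|M_t|$ together with $0\le\langle X-c,{\bf v}\rangle\le r_{out}$ gives $\int_{M_t}H\,d\mu_t\ge n|M_t|/r_{out}$, hence $\frac{d}{dt}|V_t|\le(h^+-n/r_{out})|M_t|$, and Lemma 4 with Corollary 1 close this into $\frac{d}{dt}|V_t|\le -c|V_t|^{(n-1)/(n+1)}$ once $r_{out}<n/(2h^+)$, which extinguishes in finite time; the open--closed bootstrap keeping $r_{out}<n/(2h^+)$ is sound, and monotonicity of $|V_t|$ plus $r_{out}(T_i)\to0$ along the subsequence of Section 3 correctly yields $r_{out}(t)\to0$. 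This buys an argument that avoids the avoidance principle entirely and quantifies the extinction time in terms of $|V_{T_{i_0}}|$.

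The gap is in the last step. Convergence to a single point rests entirely on the asserted pointwise bound $\min_{M_t}H\ge c_{11}/r_{out}(t)$, and the justification you give --- Blaschke selection applied to the class of pinched convex bodies with $r_{out}=1$ --- does not prove it. Hausdorff compactness controls no second-order quantity: $\min_M H$ is not lower semicontinuous under Hausdorff convergence, so a sequence of smooth pinched hypersurfaces could a priori converge to a round ball while developing an ever flatter point; and Lemma 5 and Corollary 1, which you cite as the ``roundness content'' behind the claim, only transfer the pointwise pinching to the width function and the radii, i.e.\ to zeroth-order data. (The bound is plausibly true for uniformly pinched convex hypersurfaces, but it requires a quantitative proof via the support function or the Gauss map, not bare selection.) Without it, your volume argument shows $|V_t|\downarrow0$ and $\mathrm{diam}(V_t)\to0$ but not that the bodies converge to one point: nothing in the integral inequality prevents the shrinking bodies from translating as $t\to T_{\max}$. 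The cheapest repair is the paper's own tool: for any $t_1$ with $2r_{out}(t_1)<n/h^+$, enclose $M_{t_1}$ in a ball $B_{2r_{out}(t_1)}(p_{t_1})$ and evolve its boundary sphere under (1.1); by the comparison principle for (3.3) it shrinks and traps $M_t$ inside for all $t\ge t_1$, so the sets $V_t$ are Cauchy in Hausdorff distance with limit a set of diameter zero. This also delivers the ``regions decreasing'' clause of the lemma for all $t\ge T^*$, which your proof only establishes from $T_{i_0}$ onward and only after $r_{out}$ is small enough for the unproven curvature bound to bite.
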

\begin{proof}
Let $\partial B_{r^*}(O)$ be a sphere in ${\mathbb R}^{n+1}$
centered at the origin $O$, with radius $r^*$. Since the outer
radius of $M_{T^*}$ is less than $r^*$, without loss of generality,
we may assume that the hypersurface $M_{T^*}$ is enclosed by
$\partial B_{r^*}(O)$. Now we evolve the sphere $\partial
B_{r^*}(O)$ in terms of (1.1), the radius $r_B(t)$ satisfies
\begin{eqnarray}\left\{\begin{array}{l}\frac {dr_B(t)}{dt}
=h-\frac n{r_B(t)}\leq h^+-\frac n{r_B(t)},\quad t\geq
T^*,\\[2mm]
r_B(T^*)=r^*, \end{array} \right.
\end{eqnarray}
which yields that $r_B(t)$ is decreasing because $r^*<n/{h^+}$. Then
by containment principle, which can be easily derived from (3.3), we
see that the enclosed regions of $M_t$ are decreasing for $t\geq
T^*$.

Furthermore it can be checked that the solution to the differential
inequality (4.2) is given by
\begin{equation}
r_B(t)+\frac n{h^+} \log (n-{h^+}r_B(t))\geq h^+(t-T^*)+r^*+\frac
n{h^+}\log(n-h^+r^*),
\end{equation}
which yields the finiteness of $T_{\max}$ since the left hand side
of (4.3) is uniformly bounded for $t\geq T^*$.

By convexity in Lemma 2, the pinching estimate in Corollary 1 will
imply the uniformly convergence of solutions to (1.1) to a point if
we can show that the enclosed area of $M_t$ tends to $0$ as
$t\rightarrow T_{\max}$. If this is not true, we then can place a
small ball $B_{r_0}(x_0)$ in the region enclosed by $M_t$ for all
$t\in [T^*, T_{\max})$. Again without loss of generality we assume
$x_0$ is the origin. Then the diameter of $M_t$ is uniformly bounded
from below, and $|\overline{\nabla}r|$ is also uniformly bounded by
convexity. Therefore equation (3.3) is a uniformly parabolic
equation with bounded coefficients. Hence we can apply the standard
regularity theory of uniformly parabolic equations (cf. \cite {k} or
\cite {an,z}) to conclude that the solution to (3.3) can not be
singular at $t=T_{\max}$, which is a contradiction. Therefore
$X(\cdot, t)$ must converge to a point as $t\rightarrow T_{\max}$.
This completes the proof of the lemma.
\end{proof}
\begin{remark} (i) From the proof of Lemma 6, we see that the
containment principle implies that $r_{out}$ tends to zero, as
$t\rightarrow T_{\max}$. Therefore by Proposition 1 again, the
function $\psi(t)$ must tend to infinity as $t\rightarrow T_{\max}$,
i.e.
\begin{equation}\lim_{t\rightarrow T_{\max}}\psi(t) =\infty.\end{equation}

(ii) We can see that for $\overline h=\infty$, (1.1) is still
contracting to a point. In fact from the limit of $\psi(T_i)$ in
section 3, we see that $\Lambda $ is the smallest limit of $\psi $.
That is to say if $\Lambda =\infty$, then for any sequence
$\{T_j\}\subset [0, T_{\max})$ satisfying $T_j\rightarrow T_{\max}$
as $j\rightarrow \infty$ , $\lim _{j\rightarrow
\infty}\psi(T_j)=\infty$. Therefore similarly by Proposition 1, the
inner and outer radii of the evolving hypersurfaces all tend to zero
as $t\rightarrow T_{max}$. Then the containment principle implies
that the solutions to $(1.1)$ converge to a point as $t\rightarrow
T_{max}$ for all possible limits of $h(t)$.
\end{remark}

To understand the solution $X(\cdot, t)$ near the maximal time
$T_{\max}$, we consider the solution of the rescaled equation (3.2).
We want to bound the curvature $\widetilde{H}$ of
$\widetilde{M}_{\tilde{t}}$, for this purpose, we will use a trick
of Chow (Tso) \cite {t} (see also \cite {an,m2,z}) to consider the
function
\begin{equation}\Phi=\frac {H}{\mathcal{Z}-\alpha},
\end{equation}
for a constant $\alpha $ to be chosen later. First we compute the
evolution equation of $\Phi$.
\begin{lemma}
For $t\in[0, T_{\max})$, for any constant $\alpha$ we have
\begin{eqnarray}
\frac {\partial }{\partial t}\Phi &=&
g^{ij}\overline{\nabla}_i\overline{\nabla}_j\Phi+\frac
2{\mathcal{Z}-\alpha}g^{ij}\overline{\nabla }_i\Phi \overline{\nabla
}_j\mathcal{Z}\nonumber\\
&&+\frac 1{(\mathcal{Z}-\alpha)^2}\left\{2H^2-hH-\alpha
H|A|^2-h(\mathcal{Z}-\alpha)|A|^2\right\}.
\end{eqnarray}
\end{lemma}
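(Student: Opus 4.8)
The plan is to derive the evolution equation of $\Phi = H/(\mathcal{Z}-\alpha)$ by combining the known evolution equations for $H$ and for the support function $\mathcal{Z}$, then carefully expressing everything in terms of the Gauss-map parametrization on $\mathbb{S}^n$. The key ingredients are Lemma 1(v), which gives $\partial_t H = \triangle H - (h-H)|A|^2$, and an evolution equation for $\mathcal{Z}$ under the flow (1.1), which I would compute from $\mathcal{Z}(z) = \langle z, X(\mathbf{v}^{-1}(z))\rangle$ using $\partial_t X = (h-H)\mathbf{v}$ and Lemma 1(iii) for $\partial_t \mathbf{v}$; one should obtain $\partial_t \mathcal{Z} = (h-H) + (\text{tangential correction terms})$, and in fact in the Gauss-map picture the normal-direction calculation typically yields simply $\partial_t \mathcal{Z} = h - H$ up to terms that get absorbed into the parametrization. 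I would also need the standard fact that, in the support-function coordinates, the Laplace–Beltrami operator $\triangle$ on $M_t$ relates to $\overline{\nabla}$ on $\mathbb{S}^n$ via $g^{ij}\overline{\nabla}_i\overline{\nabla}_j$ plus lower-order terms involving $h_{ij}$ from (2.1), together with $g_{ij} = h_{ik}\overline{g}^{kl}h_{lj}$ from (2.2).

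First I would set $f = \mathcal{Z}-\alpha$ and compute $\partial_t \Phi = \partial_t(H/f) = (\partial_t H)/f - (H/f^2)\partial_t f$. Substituting the two evolution equations gives a term $(\triangle H)/f$ and a term $-(h-H)|A|^2/f$ from the numerator, and $-H(h-H)/f^2$ from the denominator (using $\partial_t f = h-H$). Second, I would convert $(\triangle H)/f$ into the spatial operator appearing in the claim: writing $\Phi = H/f$, one has $H = f\Phi$, so $\triangle H = f\triangle \Phi + 2\langle\nabla f, \nabla \Phi\rangle + \Phi\triangle f$, and then I would use that $\triangle f = \triangle \mathcal{Z}$ together with the identity $\triangle\mathcal{Z} = H - |A|^2\mathcal{Z}$ — more precisely the trace of (2.1) contracted appropriately — to replace $\triangle \mathcal{Z}$. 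This substitution is what produces the $\alpha H|A|^2$ and the $2H^2 - hH$ pieces once everything is collected over the common denominator $(\mathcal{Z}-\alpha)^2$. Third, I would rewrite the operator and gradient terms in the Gauss-map coordinates: $\triangle\Phi = g^{ij}\overline{\nabla}_i\overline{\nabla}_j\Phi + (\text{first-order terms})$, and check that the first-order terms combine with $2\langle\nabla f,\nabla\Phi\rangle/f$ to give exactly $\frac{2}{\mathcal{Z}-\alpha}g^{ij}\overline{\nabla}_i\Phi\,\overline{\nabla}_j\mathcal{Z}$ as stated.

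The main obstacle I expect is the careful bookkeeping when passing between the intrinsic Laplacian $\triangle$ on $M_t$ and the operator $g^{ij}\overline{\nabla}_i\overline{\nabla}_j$ on $\mathbb{S}^n$: these differ by Christoffel-symbol terms, and one must verify that the discrepancy, when added to the gradient cross-term coming from $\triangle(f\Phi)$, collapses precisely to the single clean term $\frac{2}{\mathcal{Z}-\alpha}g^{ij}\overline{\nabla}_i\Phi\,\overline{\nabla}_j\mathcal{Z}$ with no leftover. This is the kind of computation done by Chow–Tso and by Andrews in the parametrized-by-Gauss-map setting, so I would follow \cite{t} and \cite{an} closely; the identity $g_{ij} = h_{ik}\overline{g}^{kl}h_{lj}$ is the tool that makes the Christoffel terms cancel against derivatives of $h_{ij} = \overline{\nabla}_i\overline{\nabla}_j\mathcal{Z} + \mathcal{Z}\overline{g}_{ij}$. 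A secondary check is that the term $-h(\mathcal{Z}-\alpha)|A|^2$ in the final bracket is correctly accounted for: it arises from the $-(h-H)|A|^2/f$ numerator term, since $-(h-H)|A|^2/f = -h|A|^2/f + H|A|^2/f$, and after multiplying through by $f^2/f^2$ the $-h|A|^2 f$ becomes $-h(\mathcal{Z}-\alpha)|A|^2$ inside the braces while the $H|A|^2/f$ cancels against a matching $-\alpha H|A|^2/f^2$-type contribution from the $\triangle$-conversion — tracking these signs is where care is needed, but no genuinely new idea beyond the standard technique is required. Once (4.7) is established, the intended use (bounding $\widetilde{H}$ near $T_{\max}$ by choosing $\alpha$ appropriately and applying the maximum principle) follows the Chow–Tso scheme.
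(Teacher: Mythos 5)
Your plan is essentially correct and would reach the stated identity, but it takes a different route from the paper. You start from the Lagrangian evolution equation $\partial_t H=\triangle H-(h-H)|A|^2$ of Lemma 1(v) (time derivative at fixed $x\in M^n$, intrinsic Laplacian on $M_t$) and then propose to convert to the Gauss-map picture at the end. The paper instead never leaves the support-function parametrization: it differentiates $\mathcal{Z}$ at fixed $z\in\mathbb{S}^n$ to get $\partial_t\mathcal{Z}=h-H$ exactly, uses $H=\overline{g}_{ij}(h^{-1})^{ij}$ together with (2.1)--(2.2) to derive $\partial_t H=g^{ij}[\overline{\nabla}_i\overline{\nabla}_jH+(H-h)\overline{g}_{ij}]$ directly in terms of $\overline{\nabla}$, and then assembles $\Phi$ by the same quotient-rule algebra you describe, using $g^{ij}\overline{g}_{ij}=|A|^2$ and the traced form of (2.1), $g^{ij}\overline{\nabla}_i\overline{\nabla}_j\mathcal{Z}=H-\mathcal{Z}|A|^2$. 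What the paper's route buys is that the drift term and the operator discrepancy you flag as your ``main obstacle'' simply never appear. What your route buys is that it reuses the already-stated Lemma 1(v) and produces the Lagrangian analogue of (4.6) (same reaction term, with $\triangle$ and $\nabla$ on $M_t$ in place of $g^{ij}\overline{\nabla}_i\overline{\nabla}_j$ and $\overline{\nabla}$), which is equally usable for the maximum-principle application in Lemma 8. One concrete caution: you must not mix parametrizations midstream. At fixed $x$ one has $\partial_t\mathcal{Z}=(h-H)+\langle X,\nabla H\rangle$ (via Lemma 1(iii)) and $\triangle_{M_t}\mathcal{Z}=\langle X,\nabla H\rangle+H-|A|^2\mathcal{Z}$, not the cleaner identities $\partial_t\mathcal{Z}=h-H$ and $g^{ij}\overline{\nabla}_i\overline{\nabla}_j\mathcal{Z}=H-|A|^2\mathcal{Z}$ that hold at fixed $z$; the two extra $\langle X,\nabla H\rangle$ terms cancel against each other in the Lagrangian computation, but if you take $\partial_t H$ at fixed $x$ while taking $\partial_t\mathcal{Z}=h-H$ and the traced (2.1) at fixed $z$, you will drop a first-order term and get a wrong equation.
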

\begin{proof} The proof is just the one in \cite {m2}. Because we shall
consider the evolution equations of similar functions in section 5
and 6, we outline its proof here. We first have
$$\overline {\nabla
}_i\Phi=\frac{\overline{\nabla}_iH}{\mathcal{Z}-\alpha}-\frac{H\overline
{\nabla }_i\mathcal{Z}}{(\mathcal{Z}-\alpha)^2},$$ and
\begin{eqnarray*}
\overline {\nabla }_i\overline {\nabla }_j\Phi=\frac{\overline
{\nabla }_i\overline{\nabla}_jH}{\mathcal{Z}-\alpha}-\frac{\overline
{\nabla }_iH\overline {\nabla }_j\mathcal{Z}+\overline {\nabla
}_i\mathcal{Z}\overline {\nabla
}_jH}{(\mathcal{Z}-\alpha)^2}-\frac{H\overline {\nabla
}_i\overline{\nabla}_j\mathcal{Z}}{(\mathcal{Z}-\alpha)^2}+\frac{2H\overline
{\nabla
}_i\mathcal{Z}\overline{\nabla}_j\mathcal{Z}}{(\mathcal{Z}-\alpha)^3},
\end{eqnarray*}
which yields
\begin{equation}g^{ij}\overline {\nabla }_i\overline
{\nabla }_j\Phi=\frac{g^{ij}\overline {\nabla
}_i\overline{\nabla}_jH}{\mathcal{Z}-\alpha}-\frac {2g^{ij}\overline
{\nabla
}_i\Phi\overline{\nabla}_j\mathcal{Z}}{\mathcal{Z}-\alpha}-\frac
{Hg^{ij}\overline {\nabla
}_i\overline{\nabla}_j\mathcal{Z}}{(\mathcal{Z}-\alpha)^2}.
\end{equation}

By differentiating the support function with respect to time $t$ we
have
$$\frac {\partial \mathcal{Z}}{\partial t }=h-H.$$

 By using (2.2), one has
\begin{eqnarray*}
H=g^{ij}h_{ij}=\overline{g}_{ij}(h^{-1})^{ij},
\end{eqnarray*}
where $(h^{-1})^{ij}$ is the inverse of $h_{ij}$. Thus by (2.1) we
have the evolution equation of $H$ in terms of the connection on
${\mathbb{S}}^n$
\begin{eqnarray*}
\frac {\partial H}{\partial t}=g^{ij}\left[\overline {\nabla
}_i\overline{\nabla}_jH+(H-h)\overline{g}_{ij}\right].
\end{eqnarray*}
Then the time derivative of $\Phi$ is given by
\begin{equation}\frac{\partial \Phi }{\partial t}=
\frac {g^{ij}}{\mathcal{Z}-\alpha}\left[\overline {\nabla
}_i\overline{\nabla}_jH+(H-h)\overline{g}_{ij}\right]-\frac
{H(h-H)}{(\mathcal{Z}-\alpha)^2}.
\end{equation}
Now by (2.2) again, we have the identity
$g^{ij}\overline{g}_{ij}=|A|^2$. Therefore by combining (4.7) and
(4.8), we obtain the expression
\begin{eqnarray*}
\frac{\partial \Phi }{\partial
t}&=&g^{ij}\overline{\nabla}_i\overline{\nabla}_j\Phi+\frac
2{\mathcal{Z}-\alpha}g^{ij}\overline{\nabla }_i\Phi
\overline{\nabla }_j\mathcal{Z}\\
&&+\frac {Hg^{ij}\overline {\nabla
}_i\overline{\nabla}_j\mathcal{Z}}{(\mathcal{Z}-\alpha)^2}-\frac
{h-H}{\mathcal{Z}-\alpha}|A|^2-\frac
{H(h-H)}{(\mathcal{Z}-\alpha)^2}\\
&=&g^{ij}\overline{\nabla}_i\overline{\nabla}_j\Phi+\frac
2{\mathcal{Z}-\alpha}g^{ij}\overline{\nabla }_i\Phi
\overline{\nabla }_j\mathcal{Z}\\
&&+\frac{1}{(\mathcal{Z}-\alpha)^2}\left\{2H^2-hH-\alpha
H|A|^2-h(\mathcal{Z}-\alpha)|A|^2\right\},
\end{eqnarray*}
which establishes the lemma.
\end{proof}

For $t\in [0, T^*]$, $M_t$ is smooth, compact and convex, and
therefore the mean curvature $H$ is uniformly bounded in this time
interval. Similarly, the mean curvature of $\widetilde{M}$ is also
bounded in the corresponding time interval. Moreover we can prove
the following
\begin{lemma}
There exists a positive constant $c_8$ such that for any
$\tilde{t}\in [0, \widetilde{T}_{\max})$,
$$\widetilde{H}(x, \tilde{t})\leq c_8, \quad \forall x\in M^n.$$
\end{lemma}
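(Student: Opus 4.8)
The plan is to carry out the Chow (Tso) maximum-principle argument of Lemma 7 directly on the normalized flow (3.2). By Proposition 1 and convexity, after a suitable choice of origin the rescaled support function satisfies $c_7^{-1}\le\widetilde{\mathcal{Z}}\le c_7$; fix a constant $\alpha\in(0,c_7^{-1})$ and set $\widetilde\Phi=\widetilde H/(\widetilde{\mathcal{Z}}-\alpha)$, so that $\widetilde{\mathcal{Z}}-\alpha$ stays in a fixed compact subinterval of $(0,\infty)$. Since $\widetilde H=\widetilde\Phi(\widetilde{\mathcal{Z}}-\alpha)\le c_7\widetilde\Phi$, it suffices to bound $\widetilde\Phi$ from above; on an initial interval $[0,\tilde t_0]$ the curvature $\widetilde H$ is in any case bounded by compactness and smoothness of $\widetilde M$, so we may run the estimate on the remaining interval, using in Case (I) that the enclosed regions are eventually nested and collapse (Lemma 6), which is what allows the origin to be kept in their common interior.

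The first step is to compute the evolution equation of $\widetilde\Phi$ under (3.2). From the scaling relations of Section 3 one obtains $\partial_{\tilde t}\widetilde{\mathcal{Z}}=\widetilde h-\widetilde H+\tfrac1n\widetilde\theta\widetilde{\mathcal{Z}}$ and $\partial_{\tilde t}\widetilde H=\widetilde{g}^{ij}\overline{\nabla}_i\overline{\nabla}_j\widetilde H+(\widetilde H-\widetilde h)|\widetilde A|^2-\tfrac1n\widetilde\theta\widetilde H$; running the same computation as in the proof of Lemma 7 (now on $\mathbb S^n$ with the rescaled metric, using the tilded versions of (2.1)--(2.2)) yields
\begin{eqnarray*}
\frac{\partial\widetilde\Phi}{\partial\tilde t}&=&\widetilde{g}^{ij}\overline{\nabla}_i\overline{\nabla}_j\widetilde\Phi+\frac{2}{\widetilde{\mathcal{Z}}-\alpha}\widetilde{g}^{ij}\overline{\nabla}_i\widetilde\Phi\,\overline{\nabla}_j\widetilde{\mathcal{Z}}\\
&&+\frac{1}{(\widetilde{\mathcal{Z}}-\alpha)^2}\Big\{2\widetilde H^2-\widetilde h\widetilde H-\alpha\widetilde H|\widetilde A|^2-\widetilde h(\widetilde{\mathcal{Z}}-\alpha)|\widetilde A|^2-\tfrac1n\widetilde\theta\widetilde H(2\widetilde{\mathcal{Z}}-\alpha)\Big\},
\end{eqnarray*}
which is precisely (4.6) with tildes plus the extra term $-\tfrac1n\widetilde\theta\widetilde H(2\widetilde{\mathcal{Z}}-\alpha)/(\widetilde{\mathcal{Z}}-\alpha)^2$ generated by the radial term $\tfrac1n\widetilde\theta\widetilde X$.

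At a spatial maximum of $\widetilde\Phi$ the gradient term vanishes and $\widetilde{g}^{ij}\overline{\nabla}_i\overline{\nabla}_j\widetilde\Phi\le0$, so only the bracketed reaction terms remain. Using $\widetilde h\ge0$ and $\widetilde{\mathcal{Z}}-\alpha>0$, the terms $-\widetilde h\widetilde H$ and $-\widetilde h(\widetilde{\mathcal{Z}}-\alpha)|\widetilde A|^2$ are dropped; by convexity $|\widetilde A|^2\ge\tfrac1n\widetilde H^2$, so $-\alpha\widetilde H|\widetilde A|^2\le-\tfrac\alpha n\widetilde H^3$. For the last term, $2\widetilde{\mathcal{Z}}-\alpha>0$, so it is negative and discardable when $\widetilde\theta\ge0$; when $\widetilde\theta<0$, from $\widetilde\theta=\big(\int_{\widetilde M}(\widetilde H-\widetilde h)\widetilde H\,d\widetilde\mu\big)/|M_0|$ one gets $|\widetilde\theta|\le\widetilde h^{+}\big(\int_{\widetilde M}\widetilde H\,d\widetilde\mu\big)/|M_0|$, which is bounded because $\widetilde h=\psi^{-1}h$ is bounded (in Case (I) $\psi$ is continuous, positive and $\psi\to\infty$, hence bounded below, while $h\le h^{+}$) and because $\int_{\widetilde M}\widetilde H\,d\widetilde\mu$ is bounded by Proposition 1 together with the monotonicity of the total mean curvature under inclusion of convex bodies. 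Writing $\widetilde H=\widetilde\Phi(\widetilde{\mathcal{Z}}-\alpha)$ and using $c_7^{-1}-\alpha\le\widetilde{\mathcal{Z}}-\alpha\le c_7$, all these estimates collapse to a differential inequality $\frac{d}{d\tilde t}\widetilde\Phi_{\max}\le A\widetilde\Phi_{\max}^2+B\widetilde\Phi_{\max}-D\widetilde\Phi_{\max}^3$ with $A,B,D>0$; since the cubic term dominates for large values, $\widetilde\Phi_{\max}(\tilde t)\le\max\{\widetilde\Phi_{\max}(0),z_0\}$ for all $\tilde t$, where $z_0>0$ solves $Dz^2-Az-B=0$. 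As $\widetilde\Phi_{\max}(0)<\infty$ by smoothness of $M_0$, we conclude $\widetilde H=\widetilde\Phi(\widetilde{\mathcal{Z}}-\alpha)\le c_7\widetilde\Phi_{\max}\le c_8$.

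The main obstacle is the middle step: one must correctly account for the extra term produced by the dilation factor $\tfrac1n\widetilde\theta\widetilde X$ (and, if the origin has to be re-centred, the accompanying first-order term) and, crucially, verify that this contribution is at worst linear in $\widetilde H$ with a coefficient controlled a priori by the bounded geometry of Proposition 1, so that it cannot spoil the crucial cubic term $-\tfrac\alpha n\widetilde H^3$ supplied by convexity. Once that is secured, the reaction part is a super-quadratic ODE for $\widetilde\Phi_{\max}$ and the uniform bound follows at once.
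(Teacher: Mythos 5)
Your overall strategy---running the Chow/Tso function through the maximum principle---is the right one, but you apply it to the wrong equation, and the step you yourself flag as "the main obstacle" is not a technicality: it is a genuine gap that your setup cannot close. The whole argument rests on the premise that "after a suitable choice of origin the rescaled support function satisfies $c_7^{-1}\le\widetilde{\mathcal{Z}}\le c_7$" with a \emph{fixed} origin and a \emph{fixed} $\alpha$ over the entire remaining time interval. Proposition 1 bounds only the inner and outer radii of $\widetilde{M}_{\tilde t}$, not the support function relative to any one origin. In Case (I) the unnormalized hypersurfaces shrink to a point $p$, so $\widetilde{M}_{\tilde t}$ lies in a ball of radius $2c_7$ about $\psi(t)p$, which runs off to infinity unless $p$ happens to be the origin; and even after translating so that $p=O$, the distance from $p$ to $\partial V_t$ need not be comparable to $r_{in}(t)$, so $\widetilde{\mathcal{Z}}=\psi\mathcal{Z}$ can still degenerate to $0$. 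Once $\widetilde{\mathcal{Z}}-\alpha$ is not uniformly positive, $\widetilde\Phi$ is not controlled from above by $\widetilde H$ near the degeneracy, the sign analysis of the reaction terms fails, and the cubic term $-\tfrac{\alpha}{n}\widetilde H^3$ no longer dominates in the way your ODE comparison requires. Re-centring in time, which you mention parenthetically, introduces a term $\langle z,\dot c(\tilde t)\rangle$ in $\partial_{\tilde t}\widetilde{\mathcal{Z}}$ with no a priori control on $\dot c$, so it does not rescue the argument. (Your computation of the extra reaction term $-\tfrac1n\widetilde\theta\widetilde H(2\widetilde{\mathcal{Z}}-\alpha)/(\widetilde{\mathcal{Z}}-\alpha)^2$ and the boundedness of $\widetilde\theta$ are fine; they are not where the difficulty lies.)

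The paper sidesteps all of this by running the maximum principle on the \emph{unnormalized} flow. For each terminal time $t_0\in[T^*,T_{\max})$ it places the origin at the center of the insphere of $M_{t_0}$; since the enclosed regions are nested and decreasing for $t\ge T^*$ (Lemma 6), the ball $B_{r_{in}(t_0)}$ lies inside every $M_t$ with $T^*\le t\le t_0$, so $\mathcal{Z}\ge r_{in}(t_0)$ on the whole interval with that one origin. Taking $\alpha=\tfrac12 r_{in}(t_0)$ (so $\alpha$ depends on $t_0$), the reaction-term inequality at an interior maximum gives $H\le 2n/\alpha$, hence $H(\cdot,t_0)\le c_9/r_{in}(t_0)$; only at the very end is this rescaled, $\widetilde H=\psi^{-1}H\le c_9/\tilde r_{in}(t_0)\le c_9c_7$ by Proposition 1. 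Note that here $H$ is \emph{not} uniformly bounded---it blows up like $1/r_{in}(t_0)$---and the uniform bound on $\widetilde H$ emerges exactly because the factor $\psi^{-1}$ cancels that blow-up. This is why the argument must be run on the unnormalized equation with a $t_0$-dependent origin and $\alpha$: there is no single scale on which your fixed-$\alpha$ normalized estimate can be made to work. To repair your proof, replace the normalized computation by the paper's scheme (or supply an independent proof that the normalized flow stays in a fixed bounded region with support function uniformly bounded below, which is not available at this stage of the argument).
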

\begin{proof}
 Let $\widetilde{T}^*=\int _0^{T^*}\psi^2
(t)dt$. For any $\tilde{t}\in[0, \widetilde{T}^*]$,
$\widetilde{M}_{\tilde{t}}$ is a smooth, compact and convex
hypersurface, the mean curvature $\widetilde{H}$ is therefore
uniformly bounded in $[0, \widetilde{T}^*]$.

Consider any time $t_0\in [T^*, T_{\max})$, and choose the origin of
${\mathbb R}^{n+1}$ to be the center of the sphere of radius
$r_{in}(t_0)$, which is enclosed by $X(\cdot, t_0)$. By Lemma 6, on
the time interval $[T^*, t_0]$, the support function satisfies
$$\mathcal{Z}=\langle X, {\bf v}\rangle \geq r_{in}(t_0).$$

Let $\alpha =\frac 12r_{in}(t_0)$, we consider the function
$\Phi(z,t)$ defined in (4.5) for any $(z, t)\in {\mathbb{S}
}^n\times [T^*, t_0]$. Let $(z_1, t_1)\in {\mathbb{S}}^n\times
[T^*, t_0]$ be such that $\Phi$ achieves the maximum $\sup\{\Phi
(z,t)|(z,t)\in {\mathbb{S}}^n\times [T^*, t_0]\}$. If $t_1=T^*$,
we are done, since in this case, $H(z, t_0)\leq constant$. Thus we
may assume $t_1>T^*$, then by Lemma 7, at $(z_1, t_1)$
$$2H^2-hH-\alpha
H|A|^2-h(\mathcal{Z}-\alpha)|A|^2\geq 0. $$ We use $|A|^2\geq \frac
1nH^2$ and $\mathcal{Z}\geq 2\alpha$ to obtain
$$H(z_1, t_1)\leq \frac {2n}{\alpha}.$$
Therefore for any $z\in {\mathbb{S}}^n$,
$$\Phi(z, t_0)=\frac {H(z, t_0)}{\mathcal{Z}(z, t_0)-
\alpha}\leq \Phi (z_1, t_1),$$
which implies
$$H(z, t_0)\leq \frac {c_9}{r_{in}(t_0)},$$
for a constant $c_9$, where we have used Corollary 1. By combining
with Proposition 1, we have
$$\widetilde{H}(z, \tilde{t}_0)\leq c_{10},$$
for all $z\in {\mathbb{S}}^n$. Here $\tilde{t}_0=\int
_0^{t_0}\psi^2 (t)dt$.

Since $t_0\in [T^*, T_{\max})$ is arbitrary, $\tilde{t}_0\in
[\widetilde{T}^*, \widetilde{T}_{\max})$ is also arbitrary, we thus
have the uniform bound on $\widetilde{H}$ in $[\widetilde{T}^*,
\widetilde{T}_{\max})$. Combination with the bound in $[0,
\widetilde{T}^*]$, we at last arrive at the inequality
$\widetilde{H}(x, \tilde{t})\leq c_8$, for a constant $c_8$.
\end{proof}

We can now prove the following long time existence of (3.2). In
section 3, we have bounded the inner radius and the outer radius for
$\widetilde{X}(\cdot, \tilde{t})$, and in above, we have bounded the
speed of the equation (3.2). Thus there is a positive constant
$\delta >0$ such that for each $\tilde{t}_0\in
[0,\widetilde{T}_{\max} )$, we can write the solution
$\widetilde{X}(\cdot, \tilde{t})$ to (3.2) on the time interval
$[\tilde{t}_0,\tilde{ t}_0+\delta]$ as a graph for some $\delta
>0$
$$\widetilde{X}(z, \tilde{t})=\tilde{r}(z,\tilde{t})z, \quad z\in
{\mathbb S}^n$$ for some chosen origin, and satisfies $0<c_7^{-1}\leq
\tilde{r}(z,\tilde{t})\leq c_7$, on ${\mathbb{S}}^n\times
[\tilde{t}_0, \tilde{t}_0+\delta]$. By the convexity of all
evolving hypersurfaces, we know that $\overline{\nabla }\tilde{r}$
is also uniformly bounded. We write down the evolution equation of
$\tilde{r}$, similar to (3.4), we know that it is uniformly
parabolic. So we can use the the standard regularity theory of
uniformly parabolic equations to bound the derivatives and all
higher order derivatives of $\tilde{r}$ ( see \cite {k} or \cite
{an,z}). Hence we have proved
\begin{lemma}
$\widetilde{T}_{\max}=\infty$, and $\widetilde{M}_{\tilde{t}}$
converges to a smooth hypersurface $\widetilde{M}_{\infty}$, as
$\tilde{t}\rightarrow \infty$.
\end{lemma}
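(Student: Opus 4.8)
The plan is to upgrade the local-in-time graph representation and uniform parabolicity already obtained into genuine long-time existence together with a limit hypersurface, using a covering/bootstrap argument combined with an Arzel\`a--Ascoli compactness extraction. First I would fix an arbitrary $\tilde t_0\in[0,\widetilde T_{\max})$ and recall that, after choosing an appropriate origin (for instance the center of the inner sphere of $\widetilde M_{\tilde t_0}$), Proposition~1 gives $c_7^{-1}\le\tilde r(z,\tilde t)\le c_7$ on ${\mathbb S}^n\times[\tilde t_0,\tilde t_0+\delta]$, with $\delta>0$ \emph{independent of $\tilde t_0$}: this independence is the crucial point, and it comes from the fact that the speed term of $(3.2)$ is controlled uniformly, namely $\widetilde H\le c_8$ by Lemma~9 and $\widetilde\theta/n$ is bounded by Lemma~9 together with the area-element evolution in Lemma~1(ii) and the fixed-area normalization $(3.1)$. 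Convexity of $\widetilde M_{\tilde t}$ then forces $|\overline\nabla\tilde r|$ to be bounded in terms of $c_7$ alone (the graph radius oscillates between $\tilde r_{in}$ and $\tilde r_{out}$, so its gradient cannot be large), so the evolution equation for $\tilde r$ analogous to $(3.4)$ is uniformly parabolic with bounded coefficients on each slab ${\mathbb S}^n\times[\tilde t_0,\tilde t_0+\delta]$.

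Next I would invoke the interior Schauder/Krylov--Safonov theory for uniformly parabolic equations (as cited, \cite{k} or \cite{an,z}) to obtain, on the slightly shorter slab ${\mathbb S}^n\times[\tilde t_0+\delta/2,\tilde t_0+\delta]$, a bound $\|\tilde r\|_{C^{2+\gamma,1+\gamma/2}}\le C$ for some H\"older exponent $\gamma\in(0,1)$, and then iterate (differentiating the equation and applying Schauder estimates repeatedly) to get $\|\tilde r\|_{C^{k}}\le C_k$ for every $k$, with all constants $C_k$ depending only on $n$, $c_7$, $c_8$, the bound on $\widetilde\theta$, and $k$ — in particular independent of $\tilde t_0$. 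Since $\tilde t_0$ was arbitrary, these a priori $C^k$ bounds hold on all of ${\mathbb S}^n\times[\delta/2,\widetilde T_{\max})$. Standard continuation for quasilinear parabolic equations (the solution cannot become singular while all derivatives stay bounded and the equation stays uniformly parabolic) then rules out $\widetilde T_{\max}<\infty$, giving $\widetilde T_{\max}=\infty$.

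Finally, to produce the limit hypersurface, I would use the uniform $C^k$ bounds together with the evolution equation to see that $\partial_{\tilde t}\tilde r$ and all its spatial derivatives are uniformly bounded, hence $\tilde r(\cdot,\tilde t)$ is equicontinuous in $C^k$ as $\tilde t\to\infty$; by Arzel\`a--Ascoli, along any sequence $\tilde t_j\to\infty$ a subsequence converges in $C^\infty$ to some $\tilde r_\infty\in C^\infty({\mathbb S}^n)$ with $c_7^{-1}\le\tilde r_\infty\le c_7$, defining a smooth convex limit hypersurface $\widetilde M_\infty$. (At this stage one obtains subsequential convergence; the genuine convergence of the whole flow, and the identification of $\widetilde M_\infty$ as a round sphere, is the content of the later sections via a monotonicity/integral-estimate argument, e.g.\ controlling $\int(\widetilde H-\widetilde h)^2$ or the isoperimetric deficit — I would defer that and state here only the $C^\infty$-subconvergence to a smooth $\widetilde M_\infty$.) The main obstacle is verifying the $\tilde t_0$-\emph{uniformity} of $\delta$ and of the parabolicity constants: everything hinges on Lemma~9's curvature bound and on the convexity-driven gradient bound for $\tilde r$, so I would present those two inputs carefully before quoting the regularity machinery, which is otherwise routine.
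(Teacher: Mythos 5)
Your proposal follows essentially the same route as the paper: uniform inner/outer radius bounds (Proposition~1) plus the uniform speed bound $\widetilde H\le c_8$ (Lemma~8, which you mislabel as Lemma~9) give a graph representation over ${\mathbb S}^n$ on time slabs of uniform length, convexity bounds $|\overline\nabla\tilde r|$, and the resulting uniformly parabolic equation for $\tilde r$ is fed into the standard regularity theory of \cite{k} (or \cite{an,z}) to bound all derivatives, yielding long-time existence and a smooth limit. You spell out the $\tilde t_0$-uniformity of $\delta$, the Schauder bootstrap, and the Arzel\`a--Ascoli extraction more explicitly than the paper does, and you are appropriately candid that this argument alone gives subsequential convergence, with full convergence deferred to the monotonicity argument for $|\widetilde A|^2/\widetilde H^2$; this matches the paper's structure.
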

\begin{remark}
By convexity the zero order estimate of $\widetilde{A}$ follows from
Lemma 8, then one can use the induction argument as in \cite {ha}
and \cite {h1,h2} to show that the curvature derivatives
$|\widetilde{\nabla} ^m\widetilde{A}|^2$ are each bounded by a
corresponding constant $C_m(n, M_0)$ for any $m\geq 1$, since the
terms containing $\widetilde{h}$ in the evolution equation can be
easily controlled. This in turn can also imply the long time
existence of (3.2).
\end{remark}

It remains to show that the limiting hypersurface
$\widetilde{M}_{\infty}$ is a round sphere. For this purpose, we
define a function
$$\tilde{f}=\frac{|\widetilde{A}|^2}{\widetilde{H}^2}.$$
It is easy to see that $\tilde{f}$ is a scaling invariant and we
have the following lemma similar as in (\cite{m2})
\begin{lemma}We have the following evolution equation
\begin{eqnarray}
\frac{\partial}{\partial\tilde{t}}\tilde{f}
&=&\widetilde{\triangle}\tilde{f}
+\frac{2}{\widetilde{H}}\langle\widetilde\nabla_l\tilde{f},
\widetilde{\nabla}_l\widetilde{H}\rangle\nonumber\\
&&-\frac{2}{\widetilde{H}^{4}}|\widetilde{H}\widetilde{\nabla}_{l}
\widetilde{h}_{ij}
-\widetilde{h}_{ij}\widetilde{\nabla}_{l}\widetilde{H}|^{2}
-\frac{2\widetilde{h}}{\widetilde{H}^{3}}(\widetilde{H}\emph{tr}
(\widetilde{A}^{3})-|\widetilde{A}|^{4}).
\end{eqnarray}
\end{lemma}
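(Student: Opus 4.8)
The plan is to derive the evolution equation for $\tilde f = |\widetilde A|^2/\widetilde H^2$ by combining the already-known parabolic equations for $|\widetilde A|^2$ and $\widetilde H$ under the normalized flow $(3.2)$, using the quotient rule for the parabolic operator $\partial_{\tilde t} - \widetilde\triangle$. The first step is to record the evolution equations for $|\widetilde A|^2$ and $\widetilde H$ under $(3.2)$. These are obtained from Lemma 1 (v), (vi) by the standard rescaling: the extra tangential-plus-radial term $\frac 1n\widetilde\theta\widetilde X$ in $(3.2)$ contributes only a reaction term proportional to $\widetilde\theta$ (since a dilation about the origin produces no diffusion), so one gets schematically
\begin{eqnarray*}
\frac{\partial}{\partial\tilde t}\widetilde H &=& \widetilde\triangle\widetilde H - (\widetilde h - \widetilde H)|\widetilde A|^2 - \tfrac1n\widetilde\theta\widetilde H,\\
\frac{\partial}{\partial\tilde t}|\widetilde A|^2 &=& \widetilde\triangle|\widetilde A|^2 - 2|\widetilde\nabla\widetilde A|^2 + 2|\widetilde A|^4 - 2\widetilde h\,\mathrm{tr}(\widetilde A^3) - \tfrac2n\widetilde\theta|\widetilde A|^2.
\end{eqnarray*}
The precise coefficients of the $\widetilde\theta$ terms are fixed by the scaling weights $\widetilde H = \psi^{-1}H$, $|\widetilde A|^2 = \psi^{-2}|A|^2$ listed in section 3; I would double-check these against the identity $\psi^{-1}\partial_t\psi = \frac1n\theta$.

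Next I would apply the identity
$$\frac{\partial}{\partial\tilde t}\Big(\frac{u}{v}\Big) - \widetilde\triangle\Big(\frac uv\Big) = \frac1v\Big[(\partial_{\tilde t}-\widetilde\triangle)u\Big] - \frac{u}{v^2}\Big[(\partial_{\tilde t}-\widetilde\triangle)v\Big] + \frac{2}{v}\big\langle\widetilde\nabla\tfrac uv,\widetilde\nabla v\big\rangle$$
with $u = |\widetilde A|^2$, $v = \widetilde H^2$. One needs $(\partial_{\tilde t}-\widetilde\triangle)\widetilde H^2 = 2\widetilde H(\partial_{\tilde t}-\widetilde\triangle)\widetilde H - 2|\widetilde\nabla\widetilde H|^2$. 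Substituting and simplifying, the reaction terms involving $\widetilde h$ and $\widetilde\theta$ must collapse: the $\widetilde\theta$ terms cancel exactly because $\tilde f$ is scaling-invariant (weight zero), which is the bookkeeping check that the normalization term does not appear in $(4.10)$; the $\widetilde h\cdot(\text{curvature})$ terms combine into the single expression $-\frac{2\widetilde h}{\widetilde H^3}(\widetilde H\,\mathrm{tr}(\widetilde A^3) - |\widetilde A|^4)$. The remaining gradient and quartic terms reorganize, via the standard Huisken identity
$$\frac{1}{\widetilde H^4}\big(|\widetilde A|^2|\widetilde\nabla\widetilde H|^2 + \widetilde H^2|\widetilde\nabla\widetilde A|^2 - 2\widetilde H\langle\widetilde h_{ij}\widetilde\nabla\widetilde H,\widetilde\nabla\widetilde h_{ij}\rangle\big) = \frac{1}{\widetilde H^4}|\widetilde H\widetilde\nabla_l\widetilde h_{ij} - \widetilde h_{ij}\widetilde\nabla_l\widetilde H|^2,$$
into the claimed gradient square, while the first-order term $\frac{2}{\widetilde H}\langle\widetilde\nabla\tilde f,\widetilde\nabla\widetilde H\rangle$ emerges from the $\frac2v\langle\widetilde\nabla(u/v),\widetilde\nabla v\rangle$ piece after writing $\widetilde\nabla v = \widetilde\nabla\widetilde H^2 = 2\widetilde H\widetilde\nabla\widetilde H$.

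The main obstacle is purely the algebraic bookkeeping: matching the quartic terms $2|\widetilde A|^4$ (from the $|\widetilde A|^2$ equation) against the $-\frac{|\widetilde A|^2}{\widetilde H^2}\cdot 2\widetilde H\cdot$(reaction of $\widetilde H$) contributions and against the residual $-2|\widetilde\nabla\widetilde H|^2$ term, and verifying that what survives is exactly the Huisken gradient square plus the $\widetilde h$-term, with no leftover zeroth-order curvature terms. This is the same computation carried out for the unnormalized quotient in McCoy \cite{m2}; the only genuinely new feature here is tracking the $\widetilde\theta$ terms, and since $\tilde f$ has scaling weight zero those must sum to zero, providing a built-in consistency check. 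I would organize the proof by first stating the two rescaled evolution equations as a sublemma (citing the rescaling rules of section 3 and Lemma 1), then performing the quotient computation, and finally invoking the Huisken identity to put the gradient terms in the stated form.
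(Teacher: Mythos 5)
Your proposal is correct, but it takes a longer route than the paper. The paper's proof is essentially two lines: it quotes the \emph{unnormalized} evolution equation $(4.10)$ for $f=|A|^2/H^2$ from McCoy \cite{m2}, and then observes that since $\tilde f=f$ pointwise (scale invariance) one has $\frac{\partial}{\partial\tilde t}\tilde f=\psi^{-2}\frac{\partial}{\partial t}f$, while every term on the right-hand side of $(4.10)$ is homogeneous of weight $\psi^{-2}$ when rewritten in tilded quantities ($\widetilde\triangle=\psi^{-2}\triangle$, $\widetilde H=\psi^{-1}H$, etc.); so the normalized identity falls out with no further computation and the term $\widetilde\theta$ never enters. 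You instead derive the $\widetilde\theta$-corrected evolution equations for $\widetilde H$ and $|\widetilde A|^2$ under $(3.2)$ (your two displayed equations, and their $\widetilde\theta$-coefficients $-\frac1n$ and $-\frac2n$, are correct) and then carry out the quotient-rule computation in the normalized setting, verifying explicitly that the $\widetilde\theta$ terms cancel; I checked that the bookkeeping you describe (the cancellation of the $\widetilde\theta$ and $2|\widetilde A|^4/\widetilde H^2$ terms, the assembly of the $\widetilde h$-term, and the reorganization of the gradient terms via the expansion of $|\widetilde H\widetilde\nabla_l\widetilde h_{ij}-\widetilde h_{ij}\widetilde\nabla_l\widetilde H|^2$) does close up. What the paper's approach buys is brevity, by outsourcing the entire quotient computation to \cite{m2} and using scale invariance as the structural reason that normalization leaves the equation form-invariant; what your approach buys is a self-contained derivation directly from Lemma 1 that makes the cancellation of the normalization term visible rather than implicit. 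Either is acceptable; if you write yours up, the scale-invariance remark should be promoted from a ``consistency check'' to the organizing principle, since it is exactly what the paper's shorter argument rests on.
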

\begin{proof} First we have the evolution equation of
$f=\frac {|A|^2}{H^2}$ (cf. \cite {m2})
\begin{equation}\frac{\partial}{\partial t}f =\triangle f
+\frac2H\langle\nabla_lf,
\nabla_lH\rangle-\frac{2}{H^4}|H\nabla_lh_{ij}-h_{ij}\nabla_lH|^2
-\frac{2h}{H^3}(H\textrm{tr}(A^3)-|A|^4).\end{equation} Therefore we
have
\begin{eqnarray*} \frac{\partial}{\partial\tilde{t}}\tilde{f}
&=&\frac{\partial}{\partial t}(\frac{|A|^2}{H^2})
\cdot\frac{\partial t}{\partial\tilde{t}}\\
&=&\left\{\triangle(\frac{|A|^2}{H^2})
+\frac2H\langle\nabla_l(\frac{|A|^2}{H^2}),  \nabla_lH\rangle\right.\\
&&\left.-\frac{2}{H^4}|H\nabla_lh_{ij}-h_{ij}\nabla_lH|^2
-\frac{2h}{H^3}(H\textrm{tr}(A^3)-|A|^4)\right\}\cdot\psi^{-2},
\end{eqnarray*}
which implies the desired equality.
\end{proof}

We then can prove the first part of Theorem 1.

\begin{proof} Recalling Lemma 3 we have by Lemma 10,
$$(\frac{\partial}{\partial\tilde{t}}-\widetilde{\triangle})\tilde{f}
\leq\frac{2}{\widetilde{H}}\langle\widetilde\nabla_l\tilde{f},
\widetilde{\nabla}_l\widetilde{H}\rangle.$$ By the weak maximum
principle,
$$\max_{\widetilde{M}_{\tilde{t}}}\tilde{f}\leq
\max_{\widetilde{M}_0}\tilde{f}.$$
Furthermore, by the strong maximum principle, if the maximum is
attained at some $(x,\tilde{t}_{0})$, $\tilde{t}_{0}>0$, then
$\tilde{f}$ is identically constant. Substituting into (4.9)
yields
$$\frac{2}{\widetilde{H}^{4}}|\widetilde{H}\widetilde{\nabla}_{l}
\widetilde{h}_{ij}
-\widetilde{h}_{ij}\widetilde{\nabla}_{l}\widetilde{H}|^{2}
+\frac{2\widetilde{h}}{\widetilde{H}^{3}}(\widetilde{H}\textrm{tr}
(\widetilde{A}^{3})-|\widetilde{A}|^{4})\equiv0.$$ Now,
$\widetilde{H}\textrm{tr}
(\widetilde{A}^{3})-|\widetilde{A}|^{4}\equiv0$ implies by Lemma 3
that
$$|\widetilde{A}|^{2}-\frac{1}{n}\widetilde{H}^{2}\equiv0,$$
i.e.
$$\sum\limits_{i<j}(\widetilde{\lambda}_{i}-
\widetilde{\lambda}_{j})^2\equiv0,$$
so at any point of $\widetilde{M}_{\tilde t}$, all the principal
curvatures are equal. Also
$|\widetilde{H}\widetilde{\nabla}_{l}\widetilde{h}_{ij}
-\widetilde{h}_{ij}\widetilde{\nabla}_{l}\widetilde{H}|^{2}\equiv0$
implies $\widetilde{\nabla}\widetilde{H}\equiv0$ by Lemma 3 (ii),
which then implies $\widetilde{\nabla}\widetilde{A}\equiv0$, so
$\widetilde{M}_{\tilde t_0}$ is a sphere. Therefore we have showed
that the function $\max\limits_{\widetilde{M}_{\tilde t}}\tilde{f}$
is strictly decreasing unless $\widetilde{M}_{\tilde{t}}$ is a
sphere. This implies that $\widetilde{M}_{\tilde{t}}$ approaches a
sphere as $\tilde t\rightarrow \infty$. Of course
$\widetilde{M}_\infty$ has the same total area $|M_0|$. Therefore
the proof of Theorem 1(I) is completed.
\end{proof}

\begin{remark}
(i)  One can use a similar method as in \cite {an,h1} to prove that
$\widetilde{M}_{\tilde{t}}$ converges to a sphere exponentially.\\[1mm]
(ii) It is easy to check that $0\leq h<\inf_{x\in M^n} H(x, 0)$ is
of this case, and $T^*$ below $(4.1)$ is equal to zero.
\end{remark}

\section{Case (II) $0<\Lambda <\infty$}
\label{section:5}

\renewcommand{\thesection}{\arabic{section}}
\renewcommand{\theequation}{\thesection.\arabic{equation}}
\setcounter{equation}{0}
 \indent

 In this section we consider the case $0<\Lambda <\infty$ and prove
the main Theorem 1(II). Since $\tilde{r}_{out}=r_{out}\psi$ and
$\tilde{r}_{in}=r_{in}\psi$, we have by Proposition 1
$$\frac {c_7^{-1}}{\psi}\leq r_{in}\leq r_{out}\leq \frac
{c_7}{\psi}, $$ which implies for the sequence $\{T_i\}$ in section
3, there exists a time $T^*<T_{\max}$ such that for any $T_i\geq
T^*$,
\begin{equation}
c_{12}^{-1}\leq r_{in}(T_i)\leq r_{out}(T_i)\leq c_{12}
\end{equation}
for some constant $c_{12}$. The following lemma shows that the inner
and outer radii of all evolving hypersurfaces $M_t$ are uniformly
bounded from below and above.
\begin{lemma}
There exists a constant $c_{13}$ such that $$c_{13}^{-1}\leq
r_{in}(t)\leq r_{out}(t)\leq c_{13}, \qquad \mbox {for any } t\in
[0, T_{\max}).
$$
\end{lemma}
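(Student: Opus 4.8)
The plan is to promote the radius bounds along the special sequence $\{T_i\}$ (inequality $(5.1)$) to bounds valid for \emph{all} $t\in[0,T_{\max})$, by a barrier/comparison argument using the sphere solutions of $(1.1)$ together with the containment principle derived from $(3.3)$. First I would fix the constant $c_{12}$ and the time $T^*$ from $(5.1)$, and also recall that on the compact interval $[0,T^*]$ the hypersurfaces $M_t$ are smooth, compact and convex, so $r_{in}(t)$ and $r_{out}(t)$ are automatically bounded above and below there; hence it suffices to work on $[T^*,T_{\max})$. For the upper bound on $r_{out}$: suppose $r_{out}(t)$ were large at some $t_0\in(T_i,T_{i+1})$ for consecutive sequence times; I would compare $M_{t_0}$ with an evolving sphere $\partial B_{r_B(t)}$ started at time $T_i$ enclosing $M_{T_i}$, whose radius satisfies $\frac{dr_B}{dt}=h(t)-\frac{n}{r_B}\le h^+-\frac{n}{r_B}$. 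Since $h(t)$ has the uniform upper bound $h^+$ from $(1.3)$, this ODE forces $r_B(t)$ to stay bounded (it decreases whenever $r_B>n/h^+$), so $r_B(t)\le\max\{c_{12},n/h^+\}$ for all $t\ge T_i$; by containment $M_{t}$ stays inside $\partial B_{r_B(t)}$, giving a uniform upper bound on $r_{out}(t)$ on $[T_i,T_{i+1}]$, and since $i$ is arbitrary, on all of $[T^*,T_{\max})$.

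For the lower bound on $r_{in}$ I would argue symmetrically with an inscribed evolving sphere: place inside $M_{T_i}$ a sphere $\partial B_{\rho_B(t)}$ with $\rho_B(T_i)=r_{in}(T_i)\ge c_{12}^{-1}$, whose radius satisfies $\frac{d\rho_B}{dt}=h(t)-\frac{n}{\rho_B}\ge -\frac{n}{\rho_B}$ (using $h\ge0$). This inequality only guarantees $\rho_B$ does not shrink faster than the bare mean curvature flow of a sphere, i.e.\ $\rho_B(t)^2\ge \rho_B(T_i)^2-2n(t-T_i)\ge c_{12}^{-2}-2n(t-T_i)$, which degenerates as $t\to T_{\max}$ if $T_{\max}$ is large. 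So a direct one-shot comparison from $T^*$ is not enough — here is where the sequence $\{T_i\}$ must be used more carefully: for each $t$, pick the largest $T_i\le t$, so $t-T_i\le T_{i+1}-T_i$, but this interval length need not be small either. The clean fix is to instead use the \emph{scaled} picture: by Proposition~1 and $0<\Lambda<\infty$ we know $\tilde r_{in},\tilde r_{out}$ are uniformly pinched between $c_7^{-1}$ and $c_7$ for all $\tilde t$, and we only need to show $\psi(t)$ itself stays bounded away from $0$ and $\infty$ on the whole interval, not merely along $\{T_i\}$.

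Thus the real content is: show $c^{-1}\le\psi(t)\le c$ for all $t\in[0,T_{\max})$. For the upper bound on $\psi$, the barrier argument above already gives $r_{out}(t)\le C$, hence $\tilde r_{out}=\psi\, r_{out}$ combined with $\tilde r_{out}\le c_7$ does \emph{not} immediately bound $\psi$; rather, $r_{in}(t)\ge \tilde r_{in}/\psi$ with $\tilde r_{in}\ge c_7^{-1}$ shows $\psi\le c_7/r_{in}(t)$ — so I need a \emph{lower} bound on $r_{in}(t)$, and dually $\psi\ge c_7^{-1}/r_{out}(t)$ so the upper bound on $r_{out}(t)$ already gives a \emph{lower} bound $\psi(t)\ge c_7^{-1}/C>0$. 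That lower bound on $\psi$ is therefore in hand from the enclosing-sphere comparison. It remains to bound $\psi$ from above, equivalently to bound $r_{in}(t)$ from below, equivalently (since $|M_t|^{n+1}\sim|V_t|\sim r^{n+1}$ up to the constants of Lemma~4 and Corollary~1) to bound $|M_t|$ from below. Here I would invoke $(3.1)$ in the form $\psi(t)^n = |M_0|/|M_t|$ after reading off $\psi^n|M_t|=\widetilde{|M|}=|M_0|$: bounding $\psi$ above is exactly bounding $|M_t|$ below. To get $|M_t|$ bounded below I would use the ODE for the area, $\frac{d}{dt}|M_t|=\int_{M_t}H(h-H)\,d\mu_t=-n\,\theta(t)|M_t|/n\cdot$(sign tracking via $(3.1)$), and combine with the uniform upper bound on $r_{out}$ (hence on $|M_t|$ from above) already established, plus convexity pinching, to see the evolving enclosed ball of radius $\rho_B(t)$ restarted at each $T_i$ cannot collapse: on $[T_i,T_{i+1}]$ we have $\rho_B\ge$ solution of $\dot\rho_B=-n/\rho_B$ from $\rho_B(T_i)\ge c_{12}^{-1}$, and since $r_{out}$ is uniformly bounded above on this interval the flow $(3.3)$ is uniformly parabolic with controlled coefficients so it cannot become singular, forcing $r_{in}$ to stay positive on each closed subinterval, and a compactness/continuity argument (the bound is uniform in $i$ because the restart value $c_{12}^{-1}$ and the ambient upper bound $C$ are) yields the uniform lower bound $r_{in}(t)\ge c_{13}^{-1}$ on all of $[T^*,T_{\max})$.

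\textbf{Main obstacle.} The delicate point, as flagged above, is that the inscribed-sphere comparison alone only postpones collapse of $r_{in}$ linearly in time, so one must genuinely exploit that $\tilde r_{in}\ge c_7^{-1}$ holds at \emph{every} scaled time (Proposition~1), not just along a sequence — i.e.\ the uniform pinching of the normalized flow feeds back to a uniform two-sided bound on $\psi$, and then transfers to $M_t$. I expect the write-up to set $c_{13}=\max\{c_{12},\,n/h^+,\,c_7 C,\,c_7^{-1}/C^{-1},\dots\}$ appropriately after these estimates, with the only real work being the uniform-in-$i$ control that prevents $r_{in}(t)$ from dropping to zero between consecutive sequence times, which follows from the uniform parabolicity of $(3.3)$ once $r_{out}$ is uniformly bounded above.
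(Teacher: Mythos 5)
There is a genuine gap — in fact two. First, your upper bound via the circumscribed sphere rests on a sign error: from $\frac{dr_B}{dt}=h-\frac{n}{r_B}\le h^+-\frac{n}{r_B}$ you conclude that $r_B$ ``decreases whenever $r_B>n/h^+$,'' but that is exactly backwards. When $r_B>n/h^+$ the right-hand side $h^+-n/r_B$ is \emph{positive}, so the inequality gives no control at all; a differential inequality of the form $\dot r_B\le(\text{positive})$ does not prevent growth (for constant $h=h^+$ a sphere of radius greater than $n/h^+$ expands to infinity under $(1.1)$). The comparison $h\le h^+$ forces contraction only \emph{below} the critical radius $n/h^+$, so your claimed bound $r_B\le\max\{c_{12},n/h^+\}$ fails precisely in the relevant regime $c_{12}>n/h^+$. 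Second, for the lower bound on $r_{in}$ you candidly concede you have no argument: the inscribed-sphere comparison with $h\ge0$ only delays collapse linearly in time, the detour through $\psi$ is circular (bounding $\psi$ above \emph{is} bounding $r_{in}$ below, via $\psi^n|M_t|=|M_0|$ and Lemma 4), and the closing appeal to uniform parabolicity of $(3.3)$ plus ``a compactness/continuity argument'' does not produce a bound uniform in $i$, since the gaps $T_{i+1}-T_i$ are not controlled.

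The idea you are missing is the one the paper uses, and it resolves both directions at once. Step one: show $\overline h>0$. If not, then for arbitrarily small $h^+$ one has $h<h^+$ near $T_{\max}$, the Lemma 6 argument makes $M_t$ contract inside a shrinking sphere, and $r_{out}(T_i)\to0$, contradicting $(5.1)$; hence $h\ge h^->0$ on some $[T',T_{\max})$. Step two: use the pinching of Corollary 1 to transfer an extremal value of one radius to the \emph{other} radius, in the direction where the sphere barrier becomes genuinely monotone. If $r_{out}(T'')$ were too large, then $r_{in}(T'')>\max\{n/h^-,c_{12}\}$ by pinching, and the \emph{inscribed} sphere satisfies $\dot r_B\ge h^--n/r_B>0$, so it expands monotonically; by containment $r_{out}(T_i)\ge r_B(T_i)>c_{12}$ at some later $T_i$, contradicting $(5.1)$. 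Dually, if $r_{in}(T'')$ were too small, then $r_{out}(T'')<n/h^+$ by pinching, the \emph{circumscribed} sphere satisfies $\dot r_B\le h^+-n/r_B<0$ and collapses in finite time, forcing $r_{out}(T_i)<c_{12}^{-1}$ at some later $T_i$, again contradicting $(5.1)$. The monotone runaway of the barrier is exactly what bridges the uncontrolled gaps between consecutive $T_i$'s — the difficulty your write-up identifies but does not overcome.
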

\begin{proof}
We only prove the upper bound, the lower bound is similar. First
we claim that $\overline h>0$ in this case, where $\overline h$ is
the limit in (1.3). Suppose not, we can take any $h^+ >0$, such
that there exists a time $T'<T_{\max}$ and $h(t)<h^+$ for any
$t\in [T', T_{\max})$. Then by similar proof as in Lemma 6, we
prove that $M_t$ is contracting for $t\geq T'$. Therefore
$r_{out}(T_i)\rightarrow 0$ as $T_i\rightarrow T_{\max}$, which is
a contradiction to (5.1). The claim follows.

From the claim we know that there must exist a time $T'\in (T^*,
T_{\max})$ such that for any $t\in [T', T_{\max})$, $h(t)$ has a
positive lower bound $h^-> 0$.

Since $M_t$ for any $t\in [0, T']$ is smooth, compact and convex,
the corresponding outer radius is uniformly bounded from above in
this time interval. Suppose there is a time $T''>T'$ such that
$r_{out}(T'')>c_{13}$. By Corollary 1 we can assume $c_{13}$ is
large enough so that $r_{in}(T'')>\frac n{h^-}$. Again, we evolve a
sphere $\partial B_{r_{in}(T'')}(O)$ under (1.1). The solution
$r_{B}(t)$ to the differential inequality
\begin{eqnarray*}\left\{\begin{array}{l}
\frac {dr_B(t)}{dt}=h-\frac n{r_B(t)}\geq h^-
-\frac n{r_B(t)},\quad t\geq T'',\\
[3mm]r_B(T'')=r_{in}(T'')>\frac n{h^-}, \end{array} \right.
\end{eqnarray*}
is given by
\begin{eqnarray*}
r_B(t)+\frac n{h^-} \log ({h^-}r_B(t)-n)&\geq&
h^-(t-T'')+r_{in}(T'')\\
&&+\frac n{h^-}\log(h^-r_{in}(T'')-n).
\end{eqnarray*}

Clearly $r_{B}(t)\rightarrow \infty$ as $t\rightarrow \infty$. On
the other hand, by containment principle, $\partial B_{r_B(t)}(O)$
is enclosed by $M_t$ for any $t\geq T''$, since $M_{T''}$ encloses
$\partial B_{r_B(T'')}(O)$. Therefore there exists some $T_i>T''$
such that $r_{out}(T_i)\geq r_{B}(T_i)>c_{12}$, which is a
contradiction to (5.1). Combining the case in $[0, T']$, we finish
the proof of the lemma.
\end{proof}
\begin{remark}
Similar as in Remark 3, by Lemma 11 and that the hypersurface $M_t$
uniformly converges to a round sphere (see below for the proof), we
have a limit
\begin{equation}
\lim_{t\rightarrow T_{\max}}\psi (t)=\Lambda.
\end{equation}
\end{remark}

Based on a theorem of Chow and Gulliver \cite {cg}, we have as in
\cite{m1,m2} by Lemma 11 and 4,
\begin{lemma}
There is a $d=d(M_0)$ such that $M_t\subset B_d(O)$ for all $t\in
[0, T_{\max})$.
\end{lemma}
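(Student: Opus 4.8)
The plan is to invoke the result of Chow and Gulliver (as applied in \cite{m1,m2}) which says that a flow of the type (1.1) keeps the enclosed hypersurfaces inside a fixed ball provided one has two-sided control on the isoperimetric ratio together with a bound on how far the centre of the in-sphere can drift. Concretely, I would first use Lemma 11 to get the two-sided radius bounds $c_{13}^{-1}\le r_{in}(t)\le r_{out}(t)\le c_{13}$ on all of $[0,T_{\max})$, and Lemma 4 to get the equivalence $c_1^{-1}|M_t|^{(n+1)/n}\le|V_t|\le c_1|M_t|^{(n+1)/n}$; since $|M_t|$ is then pinched between two positive constants (the enclosed volume $|V_t|$ is squeezed between $c_4 r_{in}^{n+1}$ and $c_4 r_{out}^{n+1}$, both bounded), the hypersurfaces have uniformly bounded area, diameter, and isoperimetric ratio. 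That is exactly the setting in which the Chow--Gulliver argument applies.

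The key point that the Chow--Gulliver estimate supplies is that the origin (say the centre of one fixed in-sphere of some $M_{t_0}$) cannot escape to infinity: one shows that the support function $\mathcal{Z}(\cdot,t)=\langle X,{\bf v}\rangle$ satisfies a differential inequality whose right-hand side is controlled by the (now bounded) quantities $r_{in}, r_{out}, |M_t|$ and the forcing bound $h^+$, so $\sup_{M^n}\mathcal{Z}(\cdot,t)$ stays bounded. Combining this with the upper bound on $r_{out}$ from Lemma 11 yields a fixed $d=d(M_0)$ with $M_t\subset B_d(O)$ for all $t$. I would write this step by citing Chow--Gulliver \cite{cg} exactly as \cite{m1,m2} do, rather than reproducing the monotonicity computation, and only stress that all the hypotheses of that theorem (bounded isoperimetric ratio, bounded forcing term, convexity from Lemma 2) have now been verified.

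The main obstacle — and the reason this lemma is not immediate from Lemma 11 — is that Lemma 11 controls the \emph{intrinsic} size of each $M_t$ (its inner and outer radii) but says nothing about the \emph{position} of $M_t$ in ${\mathbb R}^{n+1}$: a priori the hypersurfaces, while each of bounded diameter, could translate off to infinity. The Chow--Gulliver theorem is precisely the tool that rules this out, by bounding the motion of the centre of mass (or of an in-sphere) in terms of the isoperimetric-type data; so the crux of the write-up is to confirm that the Lemma 4 / Lemma 11 bounds give exactly the input that theorem requires. Once the centre is shown not to drift, the containment $M_t\subset B_d(O)$ follows by adding the uniform bound on $r_{out}$, completing the proof.
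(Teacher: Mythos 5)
Your proposal matches the paper's treatment, which likewise proves this lemma simply by citing Chow--Gulliver \cite{cg} as applied in \cite{m1,m2}, with Lemma 11 and Lemma 4 supplying the required uniform bounds; your identification of the real issue (Lemma 11 controls size but not position, and the Chow--Gulliver result is what rules out translation to infinity) is exactly the right point. One small caveat: the mechanism in \cite{cg} is the Aleksandrov reflection method across hyperplanes rather than a differential inequality for the support function, but since you invoke the theorem as a black box this does not affect the argument.
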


The following lemma also follows from McCoy \cite {m2}
\begin{lemma}
If $B_{4\alpha}(p_0)\subset V_{t_0}$ for some $t_0\in [0, T_{\max})$
and a point $p_0\in {\mathbb R}^{n+1}$, then
$B_{2\alpha}(p_0)\subset V_{t}$
for any $t\in [t_0, t_0+\min(\frac {6\alpha^2}{n}, T_{\max}))$.
\end{lemma}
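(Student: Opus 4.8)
The plan is to run a parabolic barrier (containment-principle) argument from the inside, exactly in the spirit of Lemmas 6 and 11, but now keeping track of the time interval explicitly. First I would use the hypothesis $B_{4\alpha}(p_0)\subset V_{t_0}$ together with the containment principle (which follows from the scalar evolution equation (3.3) for the radial graph, as already invoked in Section 4): if an interior sphere is contained in $M_{t_0}$, then evolving that sphere under (1.1) keeps it enclosed by $M_t$ for all later $t$ for which both flows exist. Concretely, center the coordinates at $p_0$ and evolve the sphere $\partial B_{r_B(t)}(p_0)$ with $r_B(t_0)=4\alpha$ according to
$$\frac{dr_B}{dt}=h(t)-\frac{n}{r_B(t)}\geq -\frac{n}{r_B(t)},$$
since $h\geq 0$. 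Thus $r_B$ is bounded below by the solution of $\dot\rho=-n/\rho$, $\rho(t_0)=4\alpha$, namely $\rho(t)^2=16\alpha^2-2n(t-t_0)$, so $r_B(t)\geq\sqrt{16\alpha^2-2n(t-t_0)}$.

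Next I would solve for the time at which this lower barrier for $r_B$ drops to $2\alpha$: we need $16\alpha^2-2n(t-t_0)\geq 4\alpha^2$, i.e. $t-t_0\leq \tfrac{6\alpha^2}{n}$. Hence for every $t\in[t_0,\,t_0+\min(\tfrac{6\alpha^2}{n},T_{\max}))$ we have $r_B(t)\geq 2\alpha$, and therefore $B_{2\alpha}(p_0)\subset B_{r_B(t)}(p_0)\subset V_t$ by the containment principle. This is the whole statement. The only technical care needed is that the comparison is valid on the common interval of existence: $M_t$ exists for $t<T_{\max}$ by hypothesis, and the interior sphere stays a genuine (shrinking but positive-radius) sphere precisely on $[t_0,t_0+\tfrac{6\alpha^2}{n})$, so the restriction to $t<t_0+\min(\tfrac{6\alpha^2}{n},T_{\max})$ is exactly what makes both objects available for comparison.

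The main (and essentially only) obstacle is justifying the containment principle itself in this setting, i.e. that a sphere enclosed by $M_{t_0}$ remains enclosed by $M_t$. Since both the evolving hypersurface and the comparison sphere are convex graphs over $\mathbb{S}^n$ satisfying the same quasilinear parabolic equation (3.3) with $h(t)$ an identical (space-independent) forcing term, this is the standard avoidance/comparison principle for such equations, already used in the proofs of Lemmas 6 and 11; one applies the maximum principle to the difference $r_{M_t}-r_{B}(t)$ of the two radial functions, the space-dependent part of the operator being identical and the $h(t)$ terms cancelling. Everything else is the elementary ODE estimate above, so no further ingredients are required.
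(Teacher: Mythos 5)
Your proof is correct and is essentially the argument the paper relies on: the paper gives no proof of this lemma, simply citing McCoy, and the cited proof is precisely your comparison of the enclosed sphere with the solution of $\dot\rho=-n/\rho$ (using $h\ge 0$), which shrinks from radius $4\alpha$ to $2\alpha$ in time exactly $6\alpha^2/n$. The containment principle you invoke is the same one the paper derives from (3.3) and already uses in Lemmas 6 and 11, so no new ingredient is needed.
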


Similar as in section 4, we consider the function $\Phi$ defined in
(4.5) for  $t\in [t_0, t_0+\min(\frac {6\alpha^2}{n}, T_{\max}))$,
and $\alpha =\frac  14c_{13}^{-1}$, where $c_{13}$ is given in Lemma
11. By using the same method as in \cite {m2}, we obtain the uniform
upper bound of the evolving mean curvature $H$.
\begin{lemma}
There exists a constant $c_{14}$ such that for any $t\in [0,
T_{\max})$
$$H(x,t)\leq c_{14}, \quad \forall x\in M^n.$$
\end{lemma}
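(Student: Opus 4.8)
The plan is to run the Chow--Tso trick already used in Lemma 8 once more, the difference being that the positivity of the denominator $\mathcal{Z}-\alpha$ is now supplied by the short-time persistence of inscribed balls (Lemma 13) rather than by any monotonicity of the enclosed regions. Fix $\alpha=\tfrac14 c_{13}^{-1}$, let $\tau^{*}\in[0,T_{\max})$ be the time at which we wish to bound $H$, and set $t_{0}=\max(0,\tau^{*}-\tfrac{3\alpha^{2}}{n})$ and $s=\tau^{*}-t_{0}\in[0,\tfrac{3\alpha^{2}}{n}]$. Since $r_{in}(t_{0})\ge c_{13}^{-1}=4\alpha$ by Lemma 11, there is an inscribed ball $B_{4\alpha}(p_{0})\subset V_{t_{0}}$; I would place the origin of ${\mathbb R}^{n+1}$ at $p_{0}$. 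By Lemma 13, $B_{2\alpha}(p_{0})\subset V_{t}$ for every $t\in[t_{0},t_{0}+s]$, so along that interval the support function obeys $\mathcal{Z}(\cdot,t)\ge 2\alpha$, hence $\mathcal{Z}-\alpha\ge\alpha>0$, while $\mathcal{Z}(\cdot,t)\le\mathrm{diam}(M_{t})\le 2r_{out}(t)\le 2c_{13}$ by Lemma 11. When $t_{0}=0$ one even has $\mathcal{Z}(\cdot,0)-\alpha\ge 3\alpha$, so the initial datum $\Phi(\cdot,0)=H(\cdot,0)/(\mathcal{Z}(\cdot,0)-\alpha)\le\tfrac{1}{3\alpha}\max_{M_{0}}H=:\phi_{0}$ is finite and depends only on $M_{0}$.

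Next I would feed this into the evolution equation $(4.6)$ for $\Phi=H/(\mathcal{Z}-\alpha)$, restricted to ${\mathbb S}^{n}\times[t_{0},t_{0}+s]$. At a spatial maximum of $\Phi(\cdot,t)$ the first-order term of $(4.6)$ vanishes and the second-order term is nonpositive; dropping the two manifestly nonpositive reaction terms $-hH$ and $-h(\mathcal{Z}-\alpha)|A|^{2}$, using $|A|^{2}\ge\tfrac1n H^{2}$, and substituting $H=(\mathcal{Z}-\alpha)\Phi$ with $\mathcal{Z}-\alpha\ge\alpha$, one gets at that point
\begin{equation*}
\frac{\partial\Phi}{\partial t}\ \le\ 2\Phi^{2}-\frac{\alpha^{2}}{n}\Phi^{3}.
\end{equation*}
Hence, by Hamilton's trick, $\phi(t):=\max_{z}\Phi(z,t)$ is a subsolution of the scalar ODE $y'=2y^{2}-\tfrac{\alpha^{2}}{n}y^{3}$, and two elementary facts about it close the argument. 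First, this ODE is strictly decreasing once $y>\tfrac{2n}{\alpha^{2}}$, so starting from the finite datum $\phi(0)\le\phi_{0}$ the solution stays $\le\max(\phi_{0},\tfrac{2n}{\alpha^{2}})$ forever; this covers the case $t_{0}=0$, i.e.\ $\tau^{*}\le\tfrac{3\alpha^{2}}{n}$. Second, for $y\ge\tfrac{4n}{\alpha^{2}}$ one has $y'\le-\tfrac{\alpha^{2}}{2n}y^{3}$, which integrates to the \emph{datum-free} estimate $y(t_{0}+s)\le\max\{\tfrac{4n}{\alpha^{2}},(\tfrac{\alpha^{2}}{n}s)^{-1/2}\}$; taking $s=\tfrac{3\alpha^{2}}{n}$ yields $\phi(\tau^{*})\le\tfrac{4n}{\alpha^{2}}$ and covers $\tau^{*}\ge\tfrac{3\alpha^{2}}{n}$. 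In either case $H(\cdot,\tau^{*})=(\mathcal{Z}-\alpha)\Phi\le 2c_{13}\max\{\phi_{0},\tfrac{4n}{\alpha^{2}}\}=:c_{14}$, a constant independent of $\tau^{*}$.

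The step I expect to be the real obstacle is the one that forces the whole argument to be localised in time: $\mathcal{Z}$ depends on the moving hypersurface and on the chosen origin, so there is no a priori uniform positive lower bound for $\mathcal{Z}-\alpha$ over all of $[0,T_{\max})$. Lemma 13 repairs this, but only on intervals of fixed length $O(\alpha^{2}/n)$, which is precisely why one must either re-initialise at $t=0$, where the data are genuinely finite, or else rely on the datum-free smoothing estimate produced by the super-quadratic reaction term $-\tfrac{\alpha^{2}}{n}\Phi^{3}$. Once that is in place the remaining ingredients — the evolution equation $(4.6)$, the reduction to a scalar ODE via the maximum principle, and the ODE comparison — are routine, and the resulting $c_{14}$ depends only on $n$, $c_{13}$, and $\max_{M_{0}}H$.
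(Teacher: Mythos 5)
Your argument is correct and follows essentially the same route as the paper, which gives this lemma only by deferring to McCoy \cite{m2}: the Tso--Chow function $\Phi=H/(\mathcal{Z}-\alpha)$ with $\alpha=\tfrac14 c_{13}^{-1}$, run on the time intervals of length $\min(6\alpha^{2}/n,\cdot)$ supplied by Lemmas 11 and 13. Your explicit datum-free ODE estimate coming from the $-\tfrac{\alpha^{2}}{n}\Phi^{3}$ term is precisely the ingredient that makes the bound uniform over all $t_{0}$ (a naive iteration of the ``new maximum'' argument of Lemma 8 across infinitely many subintervals would lose a multiplicative constant each time), and it is the content of the cited method.
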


Again by the standard regular theory of parabolic equations as in
section 4, or the argument as in \cite {h2,m1,m2}, we have
\begin{lemma}
$T_{\max}=\infty$, and $M_t$ converges to a smooth hypersurface
$M_{\infty}$, as $t\rightarrow \infty$.
\end{lemma}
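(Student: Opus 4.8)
The plan is to reduce the statement to uniform a priori estimates for the scalar quasilinear parabolic equation (3.3) satisfied by the radial graph function $r$, valid on all of $[0,T_{\max})$; long time existence then follows by a continuation argument, and convergence by compactness together with the monotonicity used in Section 4. All the geometric inputs needed for this — a two-sided radius bound (Lemma 11), confinement $M_t\subset B_d(O)$ (Lemma 12), short-time persistence of an inscribed ball (Lemma 13), and the curvature bound $H\leq c_{14}$ (Lemma 14) — are already in hand, so what remains is essentially analytic.

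First I would fix the geometry on short time slabs. Given $t_0\in[0,T_{\max})$, let $p_{t_0}$ be the center of an inscribed ball of $V_{t_0}$ of radius $r_{in}(t_0)\geq c_{13}^{-1}$ (Lemma 11), so that $B_{4\alpha}(p_{t_0})\subset V_{t_0}$ with $4\alpha=c_{13}^{-1}$. By Lemma 13 the ball $B_{2\alpha}(p_{t_0})$ stays inside $V_t$ for $t\in[t_0,t_0+\delta]$ with $\delta=6\alpha^2/n>0$ independent of $t_0$. Choosing the origin at $p_{t_0}$ and using Lemma 12, each $M_t$, $t\in[t_0,t_0+\delta]$, is a radial graph $X=r(z,t)z$ over ${\mathbb S}^n$ with $2\alpha\leq r\leq 2d$, and by convexity $|\overline\nabla r|$ is bounded in terms of $d$ and $\alpha$ alone. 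Hence (3.3) is uniformly parabolic with $C^0$ and $C^1$ bounds on $r$ that are independent of $t_0$.

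Next I would upgrade to higher order estimates. Lemma 14 gives $H\leq c_{14}$, so by convexity $0\leq\lambda_i\leq H\leq c_{14}$ and $|A|\leq\sqrt n\,c_{14}$; inserting this together with the $C^1$ bound into the formula (2.6) for $h_{ij}$ bounds $\overline\nabla_i\overline\nabla_j r$, i.e.\ gives a uniform $C^{1,1}$ bound on $r(\cdot,t)$. With the uniformly parabolic structure in hand, the Krylov--Safonov estimate yields a uniform parabolic $C^{1,\alpha}$ bound on $r$; the coefficients of (3.3) are then H\"older continuous, so parabolic Schauder theory gives a $C^{2,\alpha}$ bound, and differentiating (3.3) and iterating yields uniform $C^{k,\alpha}$ bounds for every $k$ (cf.\ \cite{k}, \cite{an,z}), all on $[0,T_{\max})$. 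Were $T_{\max}<\infty$, these bounds would force $r(\cdot,t)\to r(\cdot,T_{\max})$ in $C^\infty$ to a smooth strictly convex hypersurface, and restarting short time existence at $T_{\max}$ would contradict maximality; hence $T_{\max}=\infty$.

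For convergence, the uniform $C^\infty$ bounds make $\{M_t\}_{t\geq 0}$ precompact, so every sequence $t_i\to\infty$ has a subsequence along which $M_{t_i}$ converges smoothly to a convex hypersurface. To identify this limit and upgrade to genuine convergence I would argue as in Section 4 and \cite{m1,m2}: from the evolution equation for $f=|A|^2/H^2$ established in the proof of Lemma 10 together with Lemma 3, the two lower order terms are nonpositive, so $\max_{M_t}f$ is nonincreasing; its limit $L\geq 1$ cannot exceed $1$, since otherwise the strong maximum principle applied to a subsequential limit of the flow (exactly as in the proof of Theorem 1(I)) would make that limit totally umbilic, a contradiction. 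Hence $L=1$, every subsequential limit $M_\infty$ is a round sphere, and since the radii stay pinched in $[c_{13}^{-1},c_{13}]$, the enclosed volumes converge, and $M_t\subset B_d(O)$, the limiting sphere is uniquely determined, giving $M_t\to M_\infty$ smoothly. I expect the main obstacle to be the uniform $C^{2,\alpha}$ (curvature H\"older) estimate that turns (3.3) into an equation with H\"older coefficients amenable to Schauder bootstrapping, and — equally delicate — making every estimate genuinely uniform as $t\to T_{\max}$ even though the graph representation must be recentered on each slab; the bound $H\leq c_{14}$ of Lemma 14 and the two-sided radius bound of Lemma 11 are precisely what make this uniformity available.
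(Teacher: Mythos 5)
Your proposal follows essentially the same route as the paper: the paper's proof of this lemma is a one-line appeal to ``the standard regularity theory of parabolic equations as in section 4, or the argument as in Huisken and McCoy,'' which unpacks to exactly the steps you give --- the radius bounds of Lemma 11, the confinement and inscribed-ball Lemmas 12--13, the curvature bound of Lemma 14, the radial graph representation of $M_t$ on short time slabs making (3.3) uniformly parabolic, regularity bootstrapping and continuation, and finally the monotonicity of $f=|A|^2/H^2$ for the convergence. You have simply filled in the details that the paper delegates to Section 4 and to the references, so there is no genuine divergence of method.
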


Now we can prove the second part of Theorem 1.

\begin{proof}  We again
consider the function $f=\frac {|A|^2}{H^2}$. By the evolution
equation of $f$ in (4.10) and Lemma 3, similar to the proof of
Theorem 1(I), we have that $\max\limits_{M_t}f$ is strictly
decreasing unless $M_t$ is a sphere. This finishes the proof of
Theorem 1(II).
\end{proof}
\begin{remark}
(i) One can also prove that $M_t$ converges to a sphere
exponentially as in \cite {h2,m1}.\\[1mm]
(ii) By the limit (5.2), we easily see that
$\widetilde{M}_{\tilde{t}}$ converges to a sphere of total area
$|M_0|$.
\end{remark}
\section{Case (III) $\Lambda =0$}
\label{section:6}
\renewcommand{\thesection}{\arabic{section}}
\renewcommand{\theequation}{\thesection.\arabic{equation}}
\setcounter{equation}{0}
 \indent

This section is devoted to discuss the case $\Lambda =0$ and prove
the main Theorem 1(III). Similar to section 4, we have a limit
\begin{eqnarray}
\lim_{T_i\rightarrow T_{\max}} r_{in}(T_i)=\infty.
\end{eqnarray}
Then there exists a time $T^*<T_{\max}$ such that for any $T_i\geq
T^*$, $r_{in}(T_i)$ is greater than any given positive number $N$.
As before we evolve $\partial B_{r_{in}(T^*)}(O)$ and $\partial
B_{r_{out}(T^*)}(O)$ under (1.1), respectively. That is to say, they
satisfy the following equation
\begin{eqnarray}\frac
{dr_B(t)}{dt}=h(t)-\frac {n}{r_B(t)},\quad t\geq T^*,
\end{eqnarray}
with initial data $r_{in}(T^*)$ and $r_{out}(T^*)$ respectively.

First we consider the case $\overline h=0$. Integrating (6.2) from
$T^*$ to $T_i$ and using integral mean-value theorem, the outer
radius $r^+_B(t)$ of $M_t$ satisfies
\begin{equation}
r^+_B(T_i)-r_{out}(T^*)=\left[h(t_i)-\frac{n}{r^+_B(t_i)}\right]
(T_i-T^*),
\end{equation}
where $t_i\in [T^*, T_i]$.

If we suppose $T_{\max}<\infty$, and take limits of both sides in
(6.3), we have $\lim_{t\rightarrow T_{\max}}h(t)=\infty$, which
contradicts to $\overline h=0$. So $T_{\max}=\infty$.

Next we consider the case $0<\overline h<\infty$. In this case, we
choose $N$ greater than $\frac{n}{h^-}$ (now $h^-$ is the uniform
positive lower bound of $h(t)$ in $[T^*, T_{\max}$)). Therefore by
(6.2), the inner radius $r^{-}_{B}(t)$ and outer radius
$r^+_{B}(t)$ of $M_t$ satisfy the following inequalities,
respectively
\begin{eqnarray}
r^-_B(t)+\frac n{h^-} \log (h^-r^-_B(t)-n)&\geq&
h^-(t-T^*)+r_{in}(T^*)\nonumber \\
&&+\frac n{h^-}\log(h^-r_{in}(T^*)-n),
\end{eqnarray}
and \begin{eqnarray} r^+_B(t)+\frac n{h^-} \log
(h^-r^+_B(t)-n)&\geq& h^-(t-T^*)+r_{out}(T^*)\nonumber\\
&&+\frac n{h^-}\log(h^-r_{out}(T^*)-n).
\end{eqnarray}
\begin{lemma} When $t\geq T^*$, the regions enclosed by the
hypersurfaces $M_t$ are increasing. Furthermore $T_{\max}=\infty$,
and the solutions to (1.1) expand uniformly to $\infty$ as
$t\rightarrow \infty$.
\end{lemma}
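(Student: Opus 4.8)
The plan is to mirror the strategy of Lemma 6, but with the inequalities reversed so as to obtain expansion rather than contraction. The setup is already in place: we are in the case $\Lambda=0$ with $0<\overline h<\infty$, we have fixed a time $T^*<T_{\max}$ and chosen $N>n/h^-$ so that $r_{in}(T^*)>n/h^-$, and we have the differential equation (6.2) governing the radii of the evolving inner and outer spheres $\partial B_{r_{in}(T^*)}(O)$ and $\partial B_{r_{out}(T^*)}(O)$ under the flow (1.1).

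First I would establish monotonicity of the enclosed regions. From (6.2), since $h(t)\geq h^->0$ and $r_B(t)\geq r_{in}(T^*)>n/h^-$ as long as the solution stays above $n/h^-$, we get $dr_B/dt\geq h^--n/r_B(t)>0$ initially, hence $r^-_B(t)$ is increasing and stays above $n/h^-$ for all $t\geq T^*$; likewise $r^+_B(t)$ is increasing. Then the containment principle — derived from the scalar comparison for the radial graph equation (3.3), exactly as in the proof of Lemma 6 — shows that $\partial B_{r^-_B(t)}(O)$ stays enclosed by $M_t$ and $M_t$ stays enclosed by $\partial B_{r^+_B(t)}(O)$ for all $t\geq T^*$. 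Since the inner sphere of $M_t$ is growing, the enclosed regions $V_t$ are increasing for $t\geq T^*$.

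Next I would prove $T_{\max}=\infty$. The inequality (6.4) for $r^-_B(t)$ has a left-hand side $r^-_B(t)+\frac{n}{h^-}\log(h^-r^-_B(t)-n)$ that is monotone increasing in $r^-_B$ and tends to $+\infty$ as $r^-_B\to\infty$, while the right-hand side is $h^-(t-T^*)+\textrm{const}$. If $T_{\max}$ were finite, then on $[T^*,T_{\max})$ the radii $r^-_B(t), r^+_B(t)$ would remain in a bounded range, hence by the sandwich $r^-_B(t)\leq r_{in}(t)\leq r_{out}(t)\leq r^+_B(t)$ and (6.5) the whole hypersurface $M_t$ would stay in a fixed bounded annulus away from the origin. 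Then, just as in the last paragraph of the proof of Lemma 6, the diameter of $M_t$ is bounded below and $|\overline\nabla r|$ is bounded by convexity, so (3.3) is uniformly parabolic with bounded coefficients on $[T^*,T_{\max})$; standard parabolic regularity (\cite{k}, or \cite{an,z}) then bounds $r$ and all its derivatives up to $t=T_{\max}$, contradicting maximality of $T_{\max}$. Hence $T_{\max}=\infty$.

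Finally, for the uniform expansion to infinity: by (6.4) the inner radius satisfies $r_{in}(t)\geq r^-_B(t)$, and since the left side of (6.4) grows like $h^-(t-T^*)$ as $t\to\infty$ while being $O(r^-_B(t)+\log r^-_B(t))$, we conclude $r^-_B(t)\to\infty$, hence $r_{in}(t)\to\infty$ as $t\to\infty$. Since $r_{in}(t)$ is the radius of an inscribed sphere centered at $O$, every point of $M_t$ has distance at least $r_{in}(t)$ from $O$, so $M_t$ expands uniformly to infinity. The main obstacle — really the only subtle point — is justifying the containment principle rigorously from (3.3); but this is precisely the argument already invoked in Lemma 6, where one compares the radial graph $r(z,t)$ of $M_t$ against the constant-in-$z$ solutions $r^\pm_B(t)$ via the maximum principle for the quasilinear parabolic operator in (3.3), using that a sphere centered at $O$ is a radial graph with $\overline\nabla r\equiv 0$. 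I would simply refer to that argument rather than repeat it.
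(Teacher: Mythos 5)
Your proof is correct and follows the paper's strategy almost step for step: comparison of $M_t$ with the evolving spheres $\partial B_{r_{in}(T^*)}(O)$ and $\partial B_{r_{out}(T^*)}(O)$ via the containment principle, monotonicity of $r_B$ from $r_B>n/h^-$, and expansion from the growth of the left side of (6.4). The one place where you genuinely diverge is the proof that $T_{\max}=\infty$: you argue that if $T_{\max}<\infty$ then $M_t$ stays in a bounded annulus away from the origin, so (3.3) is uniformly parabolic with bounded coefficients and parabolic regularity lets the solution be continued past $T_{\max}$, contradicting maximality. The paper instead integrates (6.2) over the finite interval $[T^*,T_{\max})$ to see that $r^+_B(t)$, and hence $r_{out}(t)$, stays bounded, which directly contradicts the limit (6.1) ($r_{in}(T_i)\to\infty$, forced by $\Lambda=0$ and Proposition 1). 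The paper's route is shorter and uses only the ODE comparison plus the standing hypothesis of Case (III); yours is heavier (it imports the full regularity machinery from the end of the proof of Lemma 6) but is self-contained in the sense that it does not lean on (6.1) at that step, and it would survive in situations where one does not already know the inner radius blows up along a sequence. Both are valid, and the rest of your argument (monotonicity of the enclosed regions, the sandwich $r^-_B\leq r_{in}\leq r_{out}\leq r^+_B$, and $r^-_B(t)\to\infty$ from (6.4)) matches the paper's proof.
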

\begin{proof}
For $t\geq T^*$, (6.2) implies that $r_B(t)$ is increasing since
$r_B(t)>\frac n{h^-}$ initially. By containment principle again,
the enclosed regions of $M_t$ are increasing. Moreover, all
$M_t{\;}'s$ are contained in the regions between $\partial
B_{r^-_{B}(t)}(O)$ and $\partial B_{r^+_{B}(t)}(O)$ for every
$t\in [T^*, T_{\max})$.

Suppose $T_{\max}$ is finite. Integrating Equation (6.2) from
$T^*$ to $t$, we have
\begin{eqnarray*}
r^+_B(t)-r_{out}(T^*)=\int^t_{T^*}h(\tau)d\tau-
\int^t_{T^*}\frac{n}{r^+_B(\tau)}d\tau,
\end{eqnarray*}
which implies that $r^+_B(t)$ is uniformly bounded from above in
$[T^*, T_{\max})$. This is a contradiction to (6.1). Therefore
$T_{\max}=\infty$.

 Obviously $r(z,t)\rightarrow \infty$ for any $z\in {\mathbb S}^n$
 as $t\rightarrow \infty$ by (6.4), (6.5) and the containment
 principle, which implies that $M_t$ expands to $\infty$ in this
 case. The lemma follows.
\end{proof}
\begin{remark}
Lemma 16 and Proposition 1 imply the limit
\begin{eqnarray*}
\lim_{t\rightarrow \infty}\psi(t)=0.
\end{eqnarray*}
\end{remark}

We don't know whether the rescaled mean curvature $\widetilde{H}$ is
uniformly bounded from above or not, but we can prove that if the
rescaled hypersurface $\widetilde{M}_{\tilde{t}}$ converges to a
smooth hypersurface, it must be a sphere. To this end, we need to
estimate the lower bound of the rescaled mean curvature. Again we
consider the function
$$\Phi=\frac {H}{\beta-\mathcal{Z}}$$
for some constant $\beta$. As in Lemma 7 we have the evolution
equation of $\Phi$
\begin{lemma} For $t\in [0, \infty)$ and $z\in {\mathbb S}^n$,
\begin{eqnarray*}
\frac {\partial }{\partial t}\Phi &=&
g^{ij}\overline{\nabla}_i\overline{\nabla}_j\Phi-\frac
2{\beta-\mathcal{Z}}g^{ij}\overline{\nabla}_i\Phi
\overline{\nabla}_j\mathcal{Z}\nonumber\\
&&+\frac
1{(\beta-\mathcal{Z})^2}\left\{(\beta|A|^2+h)H-
[2H^2+h(\beta-\mathcal{Z})|A|^2]\right\}.
\end{eqnarray*}
\end{lemma}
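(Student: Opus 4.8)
The plan is to repeat, essentially verbatim, the computation in the proof of Lemma~7, with the denominator $\mathcal{Z}-\alpha$ replaced by $\beta-\mathcal{Z}$ and the attendant sign changes carried through. Concretely, I would first compute the covariant derivatives of $\Phi=H/(\beta-\mathcal{Z})$ on ${\mathbb S}^n$; since $\overline{\nabla}_i(\beta-\mathcal{Z})=-\overline{\nabla}_i\mathcal{Z}$, the first derivative is
$$\overline{\nabla}_i\Phi=\frac{\overline{\nabla}_iH}{\beta-\mathcal{Z}}+\frac{H\overline{\nabla}_i\mathcal{Z}}{(\beta-\mathcal{Z})^2},$$
with a $+$ sign in the second term (opposite to its counterpart in the proof of Lemma~7). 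Differentiating once more and contracting with $g^{ij}$ gives the analogue of $(4.7)$; propagating this through the remaining steps is precisely what replaces the coefficient $+\tfrac{2}{\mathcal{Z}-\alpha}$ of the mixed gradient term in Lemma~7 by the $-\tfrac{2}{\beta-\mathcal{Z}}$ in the statement.

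I would then reuse the two time-derivative identities already established in the proof of Lemma~7, namely $\partial_t\mathcal{Z}=h-H$ and $\partial_tH=g^{ij}[\overline{\nabla}_i\overline{\nabla}_jH+(H-h)\overline{g}_{ij}]$, together with $g^{ij}\overline{g}_{ij}=|A|^2$ and $g^{ij}\overline{\nabla}_i\overline{\nabla}_j\mathcal{Z}=H-\mathcal{Z}|A|^2$ (both consequences of $(2.1)$ and $(2.2)$), to expand $\partial_t\Phi$ and collect terms. The contributions containing $\mathcal{Z}H|A|^2$ cancel just as in Lemma~7, and what survives is the zeroth-order term $\tfrac{1}{(\beta-\mathcal{Z})^2}\{(\beta|A|^2+h)H-[2H^2+h(\beta-\mathcal{Z})|A|^2]\}$ asserted in the lemma.

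The quickest way to write this out, avoiding any repetition of the algebra, is to observe that $\Phi=-\widehat\Phi$ where $\widehat\Phi=H/(\mathcal{Z}-\beta)$ is exactly the function of Lemma~7 with $\alpha=\beta$; hence $\partial_t\Phi=-\partial_t\widehat\Phi$, $\overline{\nabla}_i\Phi=-\overline{\nabla}_i\widehat\Phi$, and $g^{ij}\overline{\nabla}_i\overline{\nabla}_j\Phi=-g^{ij}\overline{\nabla}_i\overline{\nabla}_j\widehat\Phi$. Substituting formula $(4.6)$ for $-\partial_t\widehat\Phi$ and rewriting each linear factor $\mathcal{Z}-\beta$ as $-(\beta-\mathcal{Z})$ (the squared factors $(\mathcal{Z}-\beta)^2$ being unaffected) reproduces the claim after a one-line rearrangement. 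Either way there is no genuine analytic obstacle here; the one point that needs care is the sign bookkeeping, which enters because $\beta-\mathcal{Z}$ is decreasing precisely where $\mathcal{Z}$ is increasing. As everywhere in Case~(III), $\beta$ is tacitly chosen large enough that $\beta>\mathcal{Z}$ on all hypersurfaces under consideration, so that $\Phi>0$ is well defined.
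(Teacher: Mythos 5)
Your proposal is correct and matches the paper's intent exactly: the paper offers no separate argument for this lemma, simply asserting it ``as in Lemma 7,'' which is precisely the computation you carry out. Your observation that $\Phi=-H/(\mathcal{Z}-\beta)$ lets you obtain the result by negating formula $(4.6)$ with $\alpha=\beta$ is a clean and valid way to package the sign bookkeeping, and the resulting zeroth-order term checks out.
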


For any $t_0\in [T^*, \infty)$, let $\beta =2r_{out}(t_0)$ in Lemma
17. Then by Lemma 16, for any $t\in [T^*, t_0]$,
$$\mathcal{Z}=<X, {\bf v}>\leq r_{out}(t_0).$$
Applying the maximum principle to the evolution equation of $\Phi$,
by the same approach as in the proof of Lemma 8 we have
\begin{lemma}
There is a positive constant $c_{15}$ such that for any $(x,
\tilde{t})\in M^n\times [0, \infty)$
$$\widetilde{H}(x, \tilde{t})\geq c_{15}.$$
\end{lemma}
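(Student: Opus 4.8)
The plan is to first obtain a lower bound for the \emph{un}rescaled mean curvature of the form $H(\cdot,t_0)\geq \gamma/r_{out}(t_0)$, uniformly in $t_0$, and then to read off Lemma 18 by rescaling. Indeed, by Proposition 1 one has $r_{out}(t_0)=\widetilde{r}_{out}(\tilde{t}_0)/\psi(t_0)\leq c_7/\psi(t_0)$, so $\widetilde{H}(\cdot,\tilde{t}_0)=\psi(t_0)^{-1}H(\cdot,t_0)\geq \psi(t_0)^{-1}\gamma/r_{out}(t_0)\geq \gamma/c_7$. On the initial window $\tilde{t}\in[0,\widetilde{T}^*]$, where $\widetilde{T}^*=\int_0^{T^*}\psi^2(t)\,dt$, the family $\widetilde{M}_{\tilde{t}}$ is smooth, compact and strictly convex over a compact time interval, hence $\widetilde{H}$ is bounded below there by a positive constant; so it suffices to prove $H(\cdot,t_0)\geq \gamma/r_{out}(t_0)$ for $t_0\in(T^*,\infty)$, where $T^*$ is taken large enough that, besides the conclusions of Lemma 16, one also has $h\geq h^->0$ on $[T^*,\infty)$.

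Fix such a $t_0$. Since the enclosed regions $V_t$ increase for $t\geq T^*$ (Lemma 16), fix the origin inside $V_{T^*}$, hence inside every $V_t$ with $t\in[T^*,t_0]$; then the support function satisfies $\mathcal{Z}(z,t)>0$ on ${\mathbb S}^n\times[T^*,t_0]$, while $V_t\subset V_{t_0}$ forces $\mathcal{Z}(z,t)\leq \kappa\, r_{out}(t_0)$ there. Take $\beta$ a large enough multiple of $r_{out}(t_0)$ (the statement uses $\beta=2r_{out}(t_0)$, which works with the origin at the circumcenter of $M_{t_0}$), so that $\Phi=H/(\beta-\mathcal{Z})$ is positive and well defined on ${\mathbb S}^n\times[T^*,t_0]$ and $\beta-\mathcal{Z}$ is comparable to $r_{out}(t_0)$ from both sides. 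Let $(z_1,t_1)$ realise $\min\{\Phi(z,t):(z,t)\in{\mathbb S}^n\times[T^*,t_0]\}$. If $t_1=T^*$, then smoothness and compactness of the fixed hypersurface $M_{T^*}$ give $H\geq \gamma_0>0$ on it, hence $\min_z\Phi(z,t_0)\geq\Phi(z_1,T^*)\geq \gamma_0/\beta$, and the bound for $H$ at $t_0$ follows since $\beta-\mathcal{Z}$ is comparable to $r_{out}(t_0)$. If $t_1>T^*$, then at $(z_1,t_1)$ we have $\overline{\nabla}\Phi=0$, $g^{ij}\overline{\nabla}_i\overline{\nabla}_j\Phi\geq0$ and $\partial_t\Phi\leq0$; substituting into the evolution equation of Lemma 17 kills the first- and second-order terms and leaves the pointwise inequality
\[
(\beta|A|^2+h)H\ \leq\ 2H^2+h(\beta-\mathcal{Z})|A|^2\qquad\text{at }(z_1,t_1).
\]

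It remains to extract a lower bound for $H(z_1,t_1)$, using the convexity pinching $\tfrac1nH^2\leq|A|^2\leq H^2$. If $H(z_1,t_1)\geq\tfrac12 h(t_1)$, then $H(z_1,t_1)\geq\tfrac12 h^->0$ is bounded below by a positive constant, which a fortiori yields $H(\cdot,t_0)\geq \gamma/r_{out}(t_0)$ because $r_{out}(t_0)\geq r_{out}(T^*)>0$. If $H(z_1,t_1)<\tfrac12 h(t_1)$, I would bound $h(\beta-\mathcal{Z})|A|^2\leq h\beta|A|^2$ (using $\mathcal{Z}>0$) and then $|A|^2\leq H^2$ to rewrite the displayed inequality as $\beta H(h-H)\geq h-2H$ at $(z_1,t_1)$; solving for $H$ gives $H(z_1,t_1)\geq h/(\beta h+2)\geq \gamma'/r_{out}(t_0)$, since $\beta$ is a multiple of $r_{out}(t_0)$ and $h^-\leq h\leq h^+$. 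Feeding this back, $\min_z\Phi(z,t_0)\geq\Phi(z_1,t_1)=H(z_1,t_1)/(\beta-\mathcal{Z}(z_1,t_1))\geq \gamma'/(\beta\,r_{out}(t_0))$, and multiplying by the positive lower bound for $\beta-\mathcal{Z}$ at time $t_0$ (again comparable to $r_{out}(t_0)$) gives $\min_z H(z,t_0)\geq \gamma/r_{out}(t_0)$. Rescaling as in the first paragraph proves Lemma 18.

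The crux is the sign analysis of the zeroth-order term of Lemma 17 at the minimum point. In contrast with Cases (I) and (II), where one only needs an \emph{upper} bound for $H$, here the flow expands and $H\to0$, so no uniform lower bound for $H$ can hold; the argument must capture precisely the borderline rate $H\gtrsim 1/r_{out}$, which is exactly the rate annihilated by the scaling factor $\psi\sim 1/r_{out}$. Two subsidiary points to watch: the minimum principle is applied with $\beta$ and the origin depending on the window endpoint $t_0$, and the minimum may occur at the final slice $t_1=t_0$, where the same first- and second-order conditions still apply; and the origin must be chosen once inside $V_{T^*}$, so that, by the monotonicity in Lemma 16, it stays inside every $V_t$, $t\in[T^*,t_0]$, keeping $\beta-\mathcal{Z}$ comparable to $r_{out}(t_0)$.
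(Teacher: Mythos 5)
Your proposal is correct and follows essentially the same route as the paper: the Tso-type function $\Phi=H/(\beta-\mathcal{Z})$ with $\beta=2r_{out}(t_0)$, the minimum principle applied to the evolution equation of Lemma 17 on $[T^*,t_0]$, and rescaling via Proposition 1. The paper merely says ``by the same approach as in the proof of Lemma 8''; your case analysis of the zeroth-order term at the minimum point (yielding $H\geq h/(\beta h+2)\gtrsim 1/r_{out}(t_0)$) correctly supplies the detail it leaves implicit.
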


At last we show that the eigenvalues of the second fundamental
form approach to each other, when $\tilde{t}\rightarrow
\widetilde{T}_{\max}$. As before we consider the function defined
in section 4
$$f=\frac {|A|^2}{H^{2}}.$$

It is easy to see that $f$ is a scaling invariant. We also have the
evolution equation of $\tilde f$ as in (4.9). By similar discussion
as in the proof of Theorem 1(I), the rescaled evolving hypersurfaces
$\widetilde{M}_{\tilde t}$ tends to a sphere as $\tilde t\rightarrow
\infty$. This finishes the proof of Theorem 1(III).

\end{document}